\newtheorem{theorem}{Theorem}[section]
\newtheorem{lemma}[theorem]{Lemma}
\newtheorem*{thma}{Theorem A}
\theoremstyle{definition}
\theoremstyle{remark}
\numberwithin{equation}{section}
\def\R{{\mathbb R}}
\def\intslash{\rlap{\kern  .32em $\mspace {.5mu}\backslash$ }\int}
\def\qsl{{\rlap{\kern  .32em $\mspace {.5mu}\backslash$ }\int_{Q_x}}}
\def\emph#1{{\it #1 }}
\def\supp{{\text{\rm supp}}}
\def\rta{\rightarrow}
\def\alp{\alpha}
\def\eps{\varepsilon}
\def\lam{\lambda}
\def\om{\omega}              
\def\fr{\frac}
\newcommand{\Be}{\begin{equation}}
\newcommand{\Ee}{\end{equation}}
\newcommand{\Bes}{\begin{equation*}}
\newcommand{\Ees}{\end{equation*}}
\newcommand{\Bsp}{\begin{split}}
\newcommand{\Esp}{\end{split}}
\newcommand{\Bm}{\begin{multline}}
\newcommand{\Em}{\end{multline}}
\newcommand{\Bea}{\begin{eqnarray}}
\newcommand{\Eea}{\end{eqnarray}}
\newcommand{\Beas}{\begin{eqnarray*}}
\newcommand{\Eeas}{\end{eqnarray*}}
\newcommand{\Benu}{\begin{enumerate}}
\newcommand{\Eenu}{\end{enumerate}}
\newcommand{\Bi}{\begin{itemize}}
\newcommand{\Ei}{\end{itemize}}
\begin{document}

\title{Weighted bound for commutators}


\author{Yong Ding}
\address{School of Mathematical Sciences, Beijing Normal University, Beijing 100875, People's Republic of China }
\email{dingy@bnu.edu.cn}
\thanks{The work is supported by NSFC (No. 11371057) and  SRFDP (No. 20130003110003).}

\author{Xudong Lai}
\address{School of Mathematical Sciences, Beijing Normal University, Beijing 100875, People's Republic of China}
\email{xudonglai@mail.bnu.edu.cn}

\subjclass[2010]{42B20}

\date{October 5, 2013}


\keywords{weighted, weak type (1,1), Calder\'on commutator}

\begin{abstract}
Let $K$ be the Calder\'on-Zygmund convolution kernel on $\mathbb{R}^d (d\geq2)$. Define the commutator associated with  $K$ and $a\in L^\infty(\mathbb{R}^d)$ by
\[ T_af(x)=p.v. \int K(x-y)m_{x,y}a\cdot f(y)dy. \]
Recently, Grafakos and Honz\'{\i}k \cite{GH} proved that $T_a$ is of weak type (1,1) for $d=2$. In this paper, we show
that $T_a$ is also weighted weak type (1,1) with the weight $|x|^\alpha\,(-2<\alpha <0)$ for $d=2$. Moreover, we prove that
$T_a$ is bounded on weighted $L^p(\mathbb{R}^d)\,(1<p<\infty)$ for all $d\ge2$.
\end{abstract}

\maketitle

\section{Introduction}
Suppose that $K$ is the Calder\'on-Zygmund convolution kernel on $\mathbb{R}^d\setminus\{0\}\, (d\ge2)$,  which means that $K$ satisfies following three conditions:
\begin{equation}\label{e:K_1}
|K(x)|\leq C|x|^{-d},
\end{equation}
\begin{equation}\label{e:K_2}
\int_{R<|x|<2R}K(x)dx=0, \text{ for all $R>0$},
\end{equation}
\begin{equation}\label{e:K_3}
|\nabla K(x)|\leq \frac{C}{|x|^{d+1}}.
\end{equation}

In 1987, Christ and Journ\'e \cite{CJ} introduced a commutator associated with $K$ and $a\in L^\infty(\mathbb{R}^d)$ by
\Bes
T_af(x)={\rm p.v.}\int K(x-y)m_{x,y}a\cdot f(y)dy,\qquad \text{ $f\in \mathcal{S}(\mathbb{R}^d)$,}
\Ees
where $\mathcal{S}(\mathbb{R}^d)$ denotes the Schwartz class and $$m_{x,y}a=\int_0^1a((1-t)x+ty)dt.$$  Note that when $d=1$, then
\[m_{x,y}a=\frac{\int_0^xa(z)dz-\int_0^ya(z)dz}{x-y}.\]
In this case, let $K(x)=\fr{1}{x}$ and $A(x)=\int_0^xa(z)dz$, then $A'(x)=a(x)\in L^\infty(\mathbb{R})$. So
\[T_af(x)={\rm p.v.}\int_\mathbb{R}\frac{A(x)-A(y)}{x-y}\frac{f(y)}{x-y}dy,\]
which is the famous Calder\'on commutator discussed in \cite{Cal}.

In \cite{CJ},  Christ and Journ\'e  showed that $T_a$ is bounded on $L^p(\mathbb{R}^d)$
 for $1<p<\infty$.
In 1995,    Hofmann \cite{H1} gave the weighted $L^p(\mathbb{R}^d)\,(1<p<\infty)$ boundedness of $T_a$ when the kernel $K(x)=\Omega(x/|x|)|x|^{-d}$.
 Recently, Grafakos and Honz\'{\i}k \cite{GH} proved that $T_a$ is weak type $(1,1)$ for
$d=2$. Further, Seeger \cite{S} showed that $T_a$ is still weak type $(1,1)$ for all $d\ge2$. The purpose of this paper is to establish a weighted variety of Grafakos and Honz\'{\i}k's results in \cite{GH}. In the sequel, for $1\leq p\leq \infty$, $A_p$ denotes the  Muckenhoupt weight class  and $L^p(\om)$ denotes the weighted $L^p(\mathbb{R}^d)$ space with norm $\|\cdot\|_{p,\omega}$. We also denote $\omega(E)=\int_E\om(x) dx$ for a measurable set $E$ in $\mathbb{R}^d$. The main result obtained in the present paper is as follows.

\begin{theorem}\label{t:2}Suppose $K$ satisfies $(\ref{e:K_1}), (\ref{e:K_2})$
and $(\ref{e:K_3})$ for $d=2$.  Let
$a\in L^\infty(\mathbb{R}^2)$ and $\omega(x)=|x|^\alpha$ for $-2<\alpha <0$. Then there exists a constant $C>0$ such that
\[ \omega(\{x\in \mathbb{R}^2:|T_af(x)|>\lambda\})\leq C\lambda^{-1}\|a\|_\infty\|f\|_{1, \omega} \]
for all $\lambda>0$ and $f\in L^1(\om)$.
\end{theorem}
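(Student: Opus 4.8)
The plan is to follow the Calderón–Zygmund paradigm adapted to weights, exploiting the fact that $\omega(x)=|x|^\alpha$ with $-2<\alpha<0$ lies in $A_1(\mathbb{R}^2)$ (indeed in $A_1$, since the power is negative and above $-d$), so that the maximal function and the good part of the decomposition are controlled. Fix $\lambda>0$ and $f\in L^1(\omega)$. First I would perform a Calderón–Zygmund decomposition of $f$ \emph{at height $\lambda$ adapted to the measure $\omega\,dx$}: using the doubling property of $\omega\,dx$ one obtains a collection of disjoint dyadic cubes $\{Q_j\}$ with $\lambda<\frac{1}{\omega(Q_j)}\int_{Q_j}|f|\,\omega\,dx\le C\lambda$, on whose union the ``bad'' part $b=\sum_j b_j$ is supported, with each $b_j$ having $\omega$-mean zero and $\int_{Q_j}|b_j|\,\omega\,dx\lesssim\lambda\,\omega(Q_j)$, and a ``good'' part $g$ with $\|g\|_\infty\lesssim\lambda$ and $\|g\|_{1,\omega}\lesssim\|f\|_{1,\omega}$. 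The good part is handled by an $L^2(\omega)$ estimate: since $\omega\in A_2$ and $T_a$ is bounded on $L^2(\omega)$ (this is the $p=2$, $d=2$ instance of the weighted $L^p$ theorem asserted in the abstract, whose proof is presumably given earlier in the paper), Chebyshev gives $\omega(\{|T_ag|>\lambda/2\})\lesssim\lambda^{-2}\|g\|_{2,\omega}^2\lesssim\lambda^{-2}\|g\|_\infty\|g\|_{1,\omega}\lesssim\lambda^{-1}\|a\|_\infty\|f\|_{1,\omega}$.

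The substance is the bad part. Let $Q_j^*$ be a fixed dilate of $Q_j$ and $\Omega=\bigcup_j Q_j^*$; by the $A_1$ condition $\omega(\Omega)\lesssim\sum_j\omega(Q_j)\lesssim\lambda^{-1}\|f\|_{1,\omega}$, so it suffices to estimate $|T_ab(x)|$ for $x\notin\Omega$. The key difficulty, and the reason this is not a routine Calderón–Zygmund argument, is that $T_a$ is \emph{not} a standard Calderón–Zygmund operator: the symbol $m_{x,y}a$ depends on both variables and is only bounded, not smooth, so the naive kernel estimate $\int_{|x-y_j|>c\,\ell(Q_j)}|K(x-y)m_{x,y}a-K(x-c_j)m_{x,c_j}a|\,dy\lesssim 1$ fails. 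Here I would follow Grafakos--Honzík: decompose $a$ and the operator via a Littlewood--Paley/Coifman--Meyer expansion of $m_{x,y}a$, splitting $T_a$ into a ``main'' piece that genuinely behaves like a CZ operator together with a family of error operators $T_a^{k}$ indexed by scale, each bounded on $L^2$ with a small operator norm or with good kernel decay after the relevant dyadic localization. Concretely, one writes $m_{x,y}a=\sum_k\text{(pieces of $a$ at frequency $\sim 2^k$)}$ and reorganizes so that for $b_j$ supported in $Q_j$ with $\ell(Q_j)\sim 2^{-\mu}$ only scales $k\gtrsim\mu$ contribute nontrivially; the cancellation $\int b_j\,\omega=0$ is used against the smooth part, while for the rough part one uses the $L^2$ bound together with the fact that the rough part, when restricted to $x\notin Q_j^*$, is supported where the kernel already provides decay. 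The $A_1$ property of $\omega$ enters exactly at the step where the $dx$-integral over $\mathbb{R}^2\setminus\Omega$ of the kernel difference is upgraded to an $\omega\,dx$-integral: since $\omega$ is doubling and $A_1$, a standard comparison $\int_{|x-c_j|>2^m\ell(Q_j)}|K\text{-difference}|\,\omega(x)\,dx\lesssim \omega(Q_j)$ holds with geometric summation in $m$.

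The main obstacle, concretely, is making the error-operator estimates survive the passage to the weighted setting: Grafakos--Honzík's argument is an intricate bookkeeping of exceptional sets at each dyadic scale, and one must check that every $L^2\to L^{2}$ or $L^1\to L^{1,\infty}$ bound used there has an $L^2(\omega)\to L^2(\omega)$, resp. $L^1(\omega)\to L^{1,\infty}(\omega)$, analogue with constants depending only on $[\omega]_{A_1}$ (equivalently, on $\alpha$ and its distance to $0$ and $-2$). For the truly singular error terms one cannot appeal to a general weighted theory and must instead reprove the relevant estimate by hand, tracking the weight through the frequency decomposition; the restriction $-2<\alpha<0$ is presumably what is needed for both the $A_1$ constant to be finite and for an auxiliary fractional-integration or Fefferman--Stein type inequality (used to absorb the contribution of a non-smooth piece of $a$) to hold on $|x|^\alpha\,dx$. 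Once every building block is shown to be weighted-stable, one assembles: $\omega(\{|T_af|>\lambda\})\le\omega(\{|T_ag|>\lambda/2\})+\omega(\Omega)+\omega(\{x\notin\Omega:|T_ab(x)|>\lambda/2\})\lesssim\lambda^{-1}\|a\|_\infty\|f\|_{1,\omega}$, which is the claim.
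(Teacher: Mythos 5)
Your high-level structure (good/bad split, weighted $L^2$ for the good part, exceptional set $\Omega$ doubled from the CZ cubes) is sound, and the role you assign to Theorem~\ref{t:1} for the good part matches the paper. But there are two substantive departures from the actual argument, and one of them is a real gap rather than an alternative route.

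First, you propose a Calder\'on--Zygmund decomposition adapted to the measure $\omega\,dx$, with each $b_j$ having $\omega$-mean zero. The paper instead performs the \emph{Lebesgue} CZ decomposition (Whitney cubes of $\{Mf>\lambda/\|a\|_\infty\}$ with $M$ the ordinary Hardy--Littlewood maximal operator), so that $\int_{Q_n}b_n\,dx=0$ and $\|b_n\|_1\lesssim(\lambda/\|a\|_\infty)|Q_n|$; the weight only enters through $\omega(E)\lesssim\lambda^{-1}\|a\|_\infty\|f\|_{1,\omega}$, which is exactly what $\omega\in A_1$ gives. This choice is not cosmetic: the cancellation is later used by subtracting the kernel value at the cube center, $\int(K_{j,j}(y,x)-K_{j,j}(y,x_{Q_n}))b_n(x)\,dx$, and the summation $\sum_n\|b_n\|_1\lesssim\lambda|B|/\|a\|_\infty$ over cubes inside a ball $B$ is performed with Lebesgue $L^1$ norms. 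With your $\omega$-mean-zero atoms neither step goes through without modification; in particular you would need H\"older smoothness of $K_{j,j}(y,x)/\omega(x)$, which fails near the origin of the weight.

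Second, and more importantly, you describe the bad-part argument as a Littlewood--Paley/Coifman--Meyer expansion of $m_{x,y}a$ into frequency pieces and a "main CZ piece plus errors." That is not what Grafakos--Honz\'ik do, nor what this paper does. The actual mechanism is a dyadic decomposition $K=\sum_j K_j$ of the kernel (not of $a$), grouping the atoms by scale into $B_{j-s}$, and proving the key inequality $\|\sum_j T_jB_{j-s}\|_{2,\omega}^2\lesssim 2^{-\varepsilon s}\lambda\|a\|_\infty\|b\|_{1,\omega}$ by a $T^*T$ (almost-orthogonality) argument. The entire technical content of the weighted version is establishing H\"older smoothness in $x$ of the kernels $K_{i,j}(y,x)=\int K_i(z-y)K_j(z-x)\,m_{x,z}a\,m_{y,z}a\,\omega(z)\,dz$ of $(T_i^*T_j)_\omega$, where the weight sits \emph{inside} the $z$-integral. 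This is handled by a case split on whether $|y|$ is comparable to $2^j$ or not: away from $2^j$ one can compare $\omega(z)$ directly to $\omega(y)$, while near $2^j$ one splits $K_j$ according to an angular cutoff $\rho(z-y,y)\lessgtr 2^{-\beta s}$ and invokes a Hofmann-type polar estimate for $\int_0^{a|y|}|s\theta+y|^\alpha\,ds$. None of this is captured by appealing to ``fractional-integration or Fefferman--Stein type inequalities on $|x|^\alpha\,dx$'' --- the restriction to power weights is used concretely and pointwise in these kernel estimates, which is exactly why the authors remark that the method does not extend to general $A_1$ weights.

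In short: the outer skeleton of your plan is right, but the mechanism you propose for the bad part is the wrong one, and as written you have not identified --- let alone carried out --- the step that is actually hard, namely the weighted H\"older smoothness bounds (\ref{e:hold smooth}) and (\ref{e:hold}) for $(T_j^*T_j)_\omega$ and $(T_i^*T_j)_\omega$.
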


 We would like to point out that the proof of Theorem $\ref{t:2}$ follows the nice idea from \cite{GH}.  However, there are some differences in proving $T_a$ is of weak type $(1,1)$ for the weighted case. In fact, the essential difficulties of proving Theorem $\ref{t:2}$ are to show the  smoothness of kernels of $(T_j^*T_j)_\om$ and $(T_i^*T_j)_\om$ (see (\ref{e:hold smooth}) and (\ref{e:hold}) below, respectively), these estimates are more complicated than no weight case, although we only consider power weight $|x|^{\alp}$ for $-2<\alpha\leq 0$. Our main innovations are further decomposition of power weight according to the dyadic decomposition. Note that $|x|^\alpha\in A_1(\mathbb{R}^2)$ if and only if $-2<\alpha\leq 0$, but our method cannot be used to deal with the general $A_1$ weight. This is the reason why we now cannot get a similar result as  Theorem $\ref{t:2}$ for general weight $w\in A_1(\mathbb{R}^2)$.

In order to prove Theorem $\ref{t:2}$, we need to establish the weighted $L^p$ boundedness of $T_a$
(actually we only need weighted $L^2$ boundedness).
Although the $L^p(\om)$ boundedness of $T_a$ given by \cite[ Theorem 2.15]{H1} for the homogeneous kernel   $K(x)=\Omega(x/|x|)|x|^{-d}$,
it seems that one cannot apply directly to $T_a$ with the kernel satisfying (\ref{e:K_1})-(\ref{e:K_3}) discussed in this paper.
However, Hofmann established a weighted $L^p$ boundedness criteria in \cite{H1} which is similar to $T1$ theorem.
The proof of Theorem \ref{t:1} given here is an application of that criteria. More precisely, $T_a$ is a special example of
the general operators studied in \cite{H1}.
\begin{theorem}\label{t:1}
Suppose $K$ satisfies the conditions $(\ref{e:K_1}), (\ref{e:K_2})$
and $(\ref{e:K_3})$. Let $a\in L^\infty(\mathbb{R}^d)\,(d\ge2)$ and $\omega\in A_p$, $1<p<\infty$. Then there exists a constant $C>0$ such that
\begin{equation}
\|T_af\|_{p,\omega}\leq C\|a\|_\infty\|f\|_{p,\omega}.
\end{equation}

\end{theorem}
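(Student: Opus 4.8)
The plan is to obtain Theorem~\ref{t:1} as a direct application of the weighted $T1$-type criterion of Hofmann in \cite{H1}. In substance that criterion asserts that a linear operator $T$ which is bounded on the unweighted space $L^2(\mathbb{R}^d)$ and whose kernel $k(x,y)$ satisfies the Calder\'on--Zygmund size estimate $|k(x,y)|\le C|x-y|^{-d}$ together with the structural (H\"ormander-type) regularity and cancellation hypotheses imposed there, is automatically bounded on $L^p(\omega)$ for every $1<p<\infty$ and every $\omega\in A_p$, the norm being controlled by the $L^2$ operator norm of $T$ and the kernel constants. Since Hofmann's explicit Theorem 2.15 is stated for homogeneous kernels $\Omega(x/|x|)|x|^{-d}$ and does not apply verbatim to a $K$ satisfying only \eqref{e:K_1}--\eqref{e:K_3}, the task reduces to exhibiting $T_a$ as an operator falling under that general criterion, with all the relevant constants proportional to $\|a\|_\infty$.

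Concretely I would proceed in three steps. First, record the kernel $k_a(x,y)=K(x-y)\,m_{x,y}a$ and the two elementary facts $|m_{x,y}a|\le\|a\|_\infty$ and $m_{x,y}a=m_{y,x}a$ (the latter by the substitution $t\mapsto 1-t$ in the defining integral); the first, combined with \eqref{e:K_1}, gives the size bound $|k_a(x,y)|\le C\|a\|_\infty|x-y|^{-d}$, and the symmetry identifies the transpose of $T_a$ as the operator of the same form built from the reflected kernel $\widetilde K(x)=K(-x)$, which again obeys \eqref{e:K_1}--\eqref{e:K_3}, so any transpose-side hypothesis of \cite{H1} reduces to the same kind of estimate in the first variable. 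Second, establish the regularity input: for $|x-x'|$ small compared with $|x-y|$ write
\[
k_a(x,y)-k_a(x',y)=\bigl[K(x-y)-K(x'-y)\bigr]m_{x,y}a+K(x'-y)\bigl[m_{x,y}a-m_{x',y}a\bigr],
\]
bound the first term by $C\|a\|_\infty|x-x'|\,|x-y|^{-d-1}$ via \eqref{e:K_3}, and for the second term, in which $m_{x,y}a-m_{x',y}a=\int_0^1\bigl[a((1-t)x+ty)-a((1-t)x'+ty)\bigr]\,dt$ carries no pointwise smoothness, use the averaging / change-of-variables device of Christ and Journ\'e \cite{CJ} together with the mean-zero property \eqref{e:K_2} of $K$ to cast this piece into the regularity/cancellation format required by \cite{H1}. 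Third, supply the unweighted bound $\|T_af\|_2\le C\|a\|_\infty\|f\|_2$, which is exactly the Christ--Journ\'e theorem quoted above. Feeding these three inputs into the criterion of \cite{H1} then yields Theorem~\ref{t:1}, with the asserted linear dependence on $\|a\|_\infty$.

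The main obstacle is the second step. Because $m_{x,y}a$ is merely the average of an $L^\infty$ function along the segment joining $x$ and $y$, the kernel $k_a$ fails the classical pointwise H\"older (or Lipschitz) smoothness in either variable, so it is not a standard Calder\'on--Zygmund operator; one must therefore read the regularity hypothesis of \cite{H1} in the weakened form appropriate to these rough commutator kernels and verify it by hand, exploiting the cancellation \eqref{e:K_2}, the gradient bound \eqref{e:K_3}, the symmetry of $m_{x,y}a$, and the Christ--Journ\'e manipulation of the $t$-integral. Once that bookkeeping is done the weighted estimate is a black-box consequence of \cite{H1} and needs no further harmonic analysis; the substantive new work of the paper is reserved for the weighted weak-type endpoint in Theorem~\ref{t:2}.
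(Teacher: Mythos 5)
Your overall route is the same as the paper's: invoke Hofmann's weighted $T1$-type criterion from \cite{H1} (Theorem~2.14 there, which the paper calls Theorem~A), and reduce the theorem to verifying that $T_a$, its transpose, and the kernel $L(x,y)=K(x-y)m_{x,y}a$ satisfy the hypotheses with constants $\lesssim\|a\|_\infty$. You correctly observe that Theorem~2.15 of \cite{H1} (homogeneous $\Omega(x/|x|)|x|^{-d}$) does not apply directly, that the size bound and $L^2$ bound come for free from $|m_{x,y}a|\le\|a\|_\infty$ and Christ--Journ\'e, and that the kernel has no pointwise H\"older smoothness in either variable so the regularity input must be read in the weakened form of \cite{H1}.

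Where your plan falls short is in collapsing everything else into a single ``regularity/cancellation'' step to be handled by the Christ--Journ\'e averaging trick. Hofmann's criterion actually has four nontrivial hypotheses beyond size and WBP, and they are verified by quite different techniques in the paper. The \emph{weak smoothness} condition (\ref{e:wsmoth}) is proved not by Christ--Journ\'e averaging but by a Fourier/van der Corput argument (Lemma~\ref{aux}): one shows $|\hat H(\xi)|\lesssim|\xi|^{-\varepsilon_0}$ for $H=K\varphi(|\cdot|)$ and combines it with the decay of $\hat\psi_s$. The \emph{qualitative technical} condition (\ref{e:techni}) is the one place where the Christ--Journ\'e second-moment estimate (\ref{e:CJ}) for $m_{x,y}a-m_{x',y}a$ is used, exactly as you anticipate. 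But the two conditions replacing ``$T1,T^*1\in BMO$'' --- the \emph{quasi-Carleson measure} condition (\ref{e:qCarl}) and the \emph{local paraproduct} condition (\ref{e:lpara}) --- are not kernel-regularity statements at all and your plan does not address them. The paper handles QCM via Littlewood--Paley theory and the $L^q$-boundedness of truncations, and handles LP via a duality argument with weighted Littlewood--Paley estimates, an interpolation using the weak-smoothness bound, and finally Rubio de Francia extrapolation to pass from $L^2(\omega)$ to $L^q$. As written, your step~2 covers only the QT condition; without separate arguments for WS, QCM, and LP the verification would be incomplete and the appeal to \cite{H1} would not go through.
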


This paper is organized as follows.  The proof of Theorem \ref{t:1} is given in Section 4.
In Section 2, we complete the proof of Theorem \ref{t:2} based on Theorem \ref{t:1} and Lemma \ref{e:main}. Moreover, in this section, we also state that the proof of Lemma \ref{e:main}  can be reduced to two key lemmas, their proofs will be given in Section 3. Throughout this paper the letter $C$ will stand for a positive constant which is independent of the essential variables and not necessarily the same one in each occurrence.

\section{Proof of Theorem \ref{t:2}}
Let us begin by giving  an analogous Calder\'on-Zygmund decomposition of
$f\in L^1(\omega)$. First, we recall the Whitney decomposition which can be found in \cite{L}:
\begin{lemma}$(\mathbf{Whitney\ decomposition})$
Let $F$ be an open nonempty proper subset of $\mathbb{R}^d$. Then there exists a family of dyadic closed cubes
$\{Q_j\}_j$ such that

{\rm (a)}\ $\bigcup Q_j=F$ and $Q_j$'s have disjoint interior.

{\rm (b)}\  $\sqrt{d}\cdot l(Q_j)\leq dist(Q_j,F^c)\leq 4\sqrt{d}\cdot l(Q_j)$, where $l(Q_j)$ denotes the side's length of $Q_j$.
\end{lemma}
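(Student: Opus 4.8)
The plan is to carry out the classical Whitney construction: first build an over-determined family of dyadic cubes, one through each point of $F$ at the scale dictated by its distance to $F^c$, verify the two-sided distance estimate (b) for each of these cubes, and then thin the family out to its maximal members to gain disjoint interiors while keeping the cover. For $k\in\mathbb{Z}$ let $\mathcal{D}_k$ be the grid of closed dyadic cubes of side length $2^{-k}$, and recall that a cube $Q$ has $\diam(Q)=\sqrt{d}\,l(Q)$; thus condition (b) is precisely the requirement $\diam(Q_j)\le\dist(Q_j,F^c)\le 4\,\diam(Q_j)$, and the choice of scale below is pinned down so that this holds with exactly the stated constants.

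\emph{Selection of a cube through each point.} Since $F$ is open, nonempty and proper, $0<\dist(x,F^c)<\infty$ for every $x\in F$. For such $x$ I would take the unique $k=k(x)\in\mathbb{Z}$ with $2\sqrt{d}\,2^{-k}\le\dist(x,F^c)<4\sqrt{d}\,2^{-k}$ (uniqueness holds because the right endpoint is twice the left, so the interval $(\,\dist(x,F^c)/4\sqrt d,\ \dist(x,F^c)/2\sqrt d\,]$ contains exactly one power of $2$), and I let $Q_x\in\mathcal{D}_{k(x)}$ be a closed dyadic cube containing $x$. I then check (b) for $Q_x$ by the triangle inequality: for $y\in Q_x$ and $z\in F^c$, $|y-z|\ge\dist(x,F^c)-|x-y|\ge 2\sqrt{d}\,2^{-k}-\diam(Q_x)=\sqrt{d}\,2^{-k}=\sqrt{d}\,l(Q_x)$, so $\dist(Q_x,F^c)\ge\sqrt{d}\,l(Q_x)$ and in particular $Q_x\subset F$; while $x\in Q_x$ gives $\dist(Q_x,F^c)\le\dist(x,F^c)<4\sqrt{d}\,2^{-k}=4\sqrt{d}\,l(Q_x)$.

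\emph{Passing to maximal cubes and concluding.} Let $\mathcal{F}_0=\{Q_x:x\in F\}$, a subcollection of the countable set of all dyadic cubes, with $\bigcup_{Q\in\mathcal{F}_0}Q=F$ (the inclusion $\supseteq$ is the previous step, $\subseteq$ holds because $x\in Q_x$). The cubes of $\mathcal{F}_0$ may be nested, but any two dyadic cubes are either nested or interior-disjoint, so I would keep only the cubes maximal with respect to inclusion. To see these exist and still cover $F$, note that if $Q\subsetneq Q'$ with both in $\mathcal{F}_0$, then by (b) for $Q'$ and $Q\subset Q'$ we get $\sqrt{d}\,l(Q')\le\dist(Q',F^c)\le\dist(Q,F^c)<4\sqrt{d}\,l(Q)$, hence $l(Q')\le 2\,l(Q)$; thus ascending chains in $\mathcal{F}_0$ are finite and every $Q\in\mathcal{F}_0$ lies in a maximal cube. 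Letting $\{Q_j\}_j$ enumerate the maximal cubes of $\mathcal{F}_0$ then gives (a): $\bigcup_j Q_j=\bigcup_{Q\in\mathcal{F}_0}Q=F$, and distinct $Q_j$'s are dyadic and non-nested, hence interior-disjoint; and (b) holds for each $Q_j$ since $Q_j\in\mathcal{F}_0$.

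\emph{Main obstacle.} The construction is routine; the only points requiring care are matching the constants $\sqrt{d}$ and $4\sqrt{d}$ exactly — which forces the precise choice of the scale $k(x)$ by $2\sqrt d\,2^{-k}\le\dist(x,F^c)<4\sqrt d\,2^{-k}$ rather than by a looser two-sided bound — and justifying that the reduction to maximal cubes still produces a cover of $F$, which is exactly the role of the finite-chain estimate $l(Q')\le 2\,l(Q)$. Beyond these, everything is the triangle inequality together with the nesting dichotomy for dyadic cubes.
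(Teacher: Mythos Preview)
Your proof is correct and is the standard Whitney construction: pick for each $x\in F$ the dyadic cube at the scale forced by $\dist(x,F^c)$, verify the two-sided estimate, and pass to maximal cubes using the nesting dichotomy. The paper does not supply its own proof of this lemma; it is quoted as a known result with a reference to \cite{L}, so there is no argument in the paper to compare with beyond noting that your write-up is the classical one.
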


\begin{lemma}\label{l:CZ}
Let $\omega\in A_1$ and $f\in L^1(\om)$. Set $E:=\{Mf(x)>\frac{\lambda}{\|a\|_\infty}\}$
where $M$ is the Hardy-Littlewood maximal operator.
Then for $a\in L^\infty(\mathbb{R}^d)$ and $\lam>0$, we have the following conclusions:

{\rm (i)}\ $E=\bigcup\limits_n Q_n$, ${Q_n}$'s are disjoint dyadic cubes.

{\rm (ii)}\ $\omega(E)\leq C\frac{\|a\|_\infty}{\lambda}\|f\|_{1,\omega}$.

{\rm (iii)}\ $f=g+b$.

{\rm (iv)}\ $b=\sum b_n$, $\supp b_n\subset Q_n$, $\int b_n=0$, $\|b_n\|_{1}\leq C\frac{\lambda}{\|a\|_\infty} |Q_n|$, $\|b\|_{1,\omega}\leq C\|f\|_{1,\omega}$.

{\rm (v)}\ $\|g\|^2_{2,\omega}\leq C\frac{\lambda}{\|a\|_\infty}\|f\|_{1,\omega}$.
\end{lemma}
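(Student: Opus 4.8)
The strategy is the standard Calderón--Zygmund stopping-time construction, adapted so that the good/bad decomposition respects the weight $\omega\in A_1$. Fix $\lambda>0$ and set $E=\{Mf>\lambda/\|a\|_\infty\}$. Since $M$ is weak type $(1,1)$ with respect to any $A_1$ weight — indeed $\omega(\{Mf>t\})\le C t^{-1}\|f\|_{1,\omega}$ for $\omega\in A_1$ — applying this with $t=\lambda/\|a\|_\infty$ immediately yields (ii). The set $E$ is open (by lower semicontinuity of $Mf$) and, assuming $f\not\equiv0$, it is a proper subset of $\mathbb{R}^d$; hence the Whitney decomposition gives $E=\bigcup_n Q_n$ with the $Q_n$ dyadic, pairwise disjoint interiors, and $\sqrt d\, l(Q_n)\le \dist(Q_n,E^c)\le 4\sqrt d\, l(Q_n)$. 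Passing to the maximal dyadic cubes in this family (or using the dyadic version of the Whitney/Calderón--Zygmund selection directly) we get (i). The geometric control from (b) ensures that each $Q_n$ has a neighbor point $\bar x_n\in E^c$ at distance comparable to $l(Q_n)$, so $M f(\bar x_n)\le \lambda/\|a\|_\infty$; averaging over a fixed dilate of $Q_n$ containing $\bar x_n$ then gives the crucial pointwise bound
\[
\frac{1}{|Q_n|}\int_{Q_n}|f|\le \frac{1}{|Q_n|}\int_{c Q_n}|f|\le C\, M f(\bar x_n)\le C\,\frac{\lambda}{\|a\|_\infty}.
\]

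Next I would define the decomposition $f=g+b$ in the usual way: on each $Q_n$ set $b_n=(f-\langle f\rangle_{Q_n})\mathbf{1}_{Q_n}$ where $\langle f\rangle_{Q_n}=|Q_n|^{-1}\int_{Q_n}f$, put $b=\sum_n b_n$, and let $g=f-b$, so that $g=f$ on $E^c$ and $g=\langle f\rangle_{Q_n}$ on each $Q_n$. Then (iii) holds by construction, and in (iv): $\supp b_n\subset Q_n$ and $\int b_n=0$ are immediate; $\|b_n\|_1\le 2\int_{Q_n}|f|\le C(\lambda/\|a\|_\infty)|Q_n|$ follows from the pointwise average bound above; and $\|b\|_{1,\omega}\le \|f\|_{1,\omega}+\sum_n |\langle f\rangle_{Q_n}|\,\omega(Q_n)$, where the second sum is controlled using $\omega\in A_1$: by the $A_1$ condition $\omega(Q_n)\le C|Q_n|\inf_{Q_n}\omega\le C|Q_n|\cdot\omega(x)$ for a.e.\ $x\in Q_n$, hence $|\langle f\rangle_{Q_n}|\,\omega(Q_n)\le C\int_{Q_n}|f(y)|\,\frac{\omega(Q_n)}{|Q_n|}\,dy\le C\int_{Q_n}|f|\omega\,dy$, and summing over the disjoint $Q_n$ gives $\|b\|_{1,\omega}\le C\|f\|_{1,\omega}$.

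It remains to prove (v), the weighted $L^2$ bound on $g$. Split $\|g\|_{2,\omega}^2=\int_{E^c}|f|^2\omega+\sum_n |\langle f\rangle_{Q_n}|^2\omega(Q_n)$. On $E^c$ we have $|f(x)|\le Mf(x)\le \lambda/\|a\|_\infty$ a.e., so $\int_{E^c}|f|^2\omega\le (\lambda/\|a\|_\infty)\int_{E^c}|f|\omega\le (\lambda/\|a\|_\infty)\|f\|_{1,\omega}$. For the cube part, $|\langle f\rangle_{Q_n}|\le C\lambda/\|a\|_\infty$ by the averaging bound, so $|\langle f\rangle_{Q_n}|^2\omega(Q_n)\le C(\lambda/\|a\|_\infty)|\langle f\rangle_{Q_n}|\,\omega(Q_n)\le C(\lambda/\|a\|_\infty)\int_{Q_n}|f|\omega$ — again using the $A_1$ comparison $\omega(Q_n)/|Q_n|\le C\,\omega(y)$ on $Q_n$ — and summing over the disjoint $Q_n$ yields $\sum_n|\langle f\rangle_{Q_n}|^2\omega(Q_n)\le C(\lambda/\|a\|_\infty)\|f\|_{1,\omega}$. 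Adding the two pieces gives (v).

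The only genuinely delicate point, and the one I would be most careful about, is the systematic use of the $A_1$ condition to pass between the unweighted average $|Q_n|^{-1}\int_{Q_n}|f|$ (controlled by the stopping time via the Whitney geometry) and the weighted quantities $\omega(Q_n)$, $\int_{Q_n}|f|\omega$; everything hinges on the pointwise inequality $\omega(Q)\le C|Q|\,\operatorname*{ess\,inf}_{Q}\omega$ valid for $\omega\in A_1$ and any cube $Q$. Once this is in hand the rest is the classical argument, and the fact that we only need $L^2$ (rather than $L^p$) on $g$ is what makes the final estimate (v) clean.
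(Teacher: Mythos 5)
Your proposal is correct and follows essentially the same Calder\'on--Zygmund construction as the paper: Whitney decomposition of the open level set $E$, weighted weak type $(1,1)$ of $M$ for (ii), the Whitney-geometry averaging bound $\frac{1}{|Q_n|}\int_{Q_n}|f|\le C\lambda/\|a\|_\infty$, and the usual good/bad split. The only minor variation is how the cube-part sums in (iv) and (v) are closed: the paper substitutes $|\langle f\rangle_{Q_n}|\le C\lambda/\|a\|_\infty$ and then invokes the global bound $\omega(E)\le C\|a\|_\infty\|f\|_{1,\omega}/\lambda$, whereas you apply the pointwise $A_1$ comparison $\omega(Q_n)/|Q_n|\le C\operatorname*{ess\,inf}_{Q_n}\omega$ to dominate each term by $C\int_{Q_n}|f|\omega$ before summing over the disjoint $Q_n$; the two routes are equivalent and both standard.
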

\begin{proof}
Since $E$ is open, we can make a dyadic Whitney decomposition of the set $E$. Thus $E$ is the union of the disjoint dyadic cubes $Q_n$ and we have
\begin{equation}\label{e:whitney}
\sqrt{d}\,l(Q_n)<dist(Q_n,E^c)<4\sqrt{d}\,l(Q_n).
\end{equation}
By the weighted weak type (1,1) of $M$, we have
\Be\label{e:ome}
\omega (E)\leq C\frac{\|a\|_\infty}{\lambda}\|f\|_{1,\omega}.
\Ee
We write $f=g+b$, where $g=f\chi_E^c+\sum\limits_n\frac{1}{|Q_n|}\int_{Q_n}f(x)dx\chi_{Q_n}$, $b=\sum\limits_n \{f-\frac{1}{|Q_n|}\int_{Q_n}f(x)dx\}\chi_{Q_n}=:\sum\limits_nb_n$.
So, $b_n$ supports in $Q_n$ and $\int b_n=0$. Let $tQ_n$ denote the cube with $t$ times the side length of $Q_n$ and the same center.
We first claim that
\Be\label{e:czf}
\frac{1}{|Q_n|}\int_{Q_n}|f(x)|dx\leq C\frac{\lambda}{\|a\|_\infty}.
\Ee
In fact, by the Whitney decomposition's property (\ref{e:whitney}) we have $9\sqrt{d}Q_n\cap E^c\neq{\O}$. Thus by the
 definition of $E$, there exists $x_0\in 9\sqrt{d}Q_n$ such that $Mf(x_0)\leq\frac{\lambda}{\|a\|_\infty}$. Using the
 property of maximal function, we have $\fr{1}{|9\sqrt{d}Q_n|}\int_{9\sqrt{d}Q_n}|f(x)|dx\leq
 C\frac{\lam}{\|a\|_\infty}$. Hence we have the estimate $$\frac{1}{|Q_n|}\int_{Q_n}|f(x)|dx\leq
 \frac{1}{|Q_n|}\int_{9\sqrt{d}Q_n}|f(x)|dx\leq C\frac{\lambda}{\|a\|_\infty}.$$
For $b_n$ and $b$, by (\ref{e:ome}) and (\ref{e:czf}) we have
$$\|b_n\|_{1}\leq 2\int_{Q_n}|f(x)|dx\leq C\frac{\lambda}{\|a\|_\infty} |Q_n|,$$
$$\|b\|_{1,\omega}\leq \|f\|_{1,\omega}+C\frac{\lambda}{\|a\|_\infty}\omega(E)\leq C\|f\|_{1,\omega}.$$

Note that if $x\in E^c$, it is obvious that $|f(x)|\leq \fr{\lam}{\|a\|_\infty}$. Using this fact, (\ref{e:ome}) and (\ref{e:czf}), we have
$$\|g\|^2_{2,\omega}\leq \frac{\lambda}{\|a\|_\infty}\|f\|_{1,\omega}+C\big(\frac{\lambda}{\|a\|_\infty}\big)^2\omega(E)
\leq C\frac{\lambda}{\|a\|_\infty}\|f\|_{1,\omega}.$$
\end{proof}
In the following we use Lemma \ref{l:CZ} for $d=2$ and $\omega(x)=|x|^{\alpha}$ with $-2<\alpha<0$. Denote $\mathfrak{Q}_k=\{Q_n: l(Q_n)=2^k \}$ and let $B_k=\sum\limits_{Q\in\mathfrak{Q}_k} b_Q$. Taking a smooth function $\phi$ on $[0,\infty)$ such that supp$\phi\subset\{x:\frac{1}{4}\leq |x|\leq 1\}$ and
$\sum_j\phi_j(x)=1$ for all $x\in \mathbb{R}^2\backslash\{0\}$, where $\phi_j(x)=\phi(2^{-j}x)$. Write $K=\sum_j
K_j$, where $K_j(x)=\phi_j(x)K(x)$ and define the corresponding operators $T_j$ with the kernel $K_j(x-y)m_{x,y}a$. Clearly we have $T_a=\sum_jT_j$.

We now state a lemma, which plays an important role in the proof of  Theorem \ref{t:2}:
\begin{lemma}\label{e:main}
There exists an $\varepsilon>0$ such that for any integer $s\geq10$,
\begin{equation}
\big\|\sum_jT_jB_{j-s}\big\|_{2,\omega}^2\leq C2^{-\varepsilon s}\lambda \|a\|_\infty \|b\|_{1,\omega},
\end{equation}
where $C$ is a constant depended on $K$ only.
\end{lemma}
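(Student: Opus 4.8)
The plan is to exploit almost-orthogonality between the pieces $T_j B_{j-s}$ for different scales $j$, reducing the $L^2(\omega)$ estimate to a diagonal sum plus a small off-diagonal contribution, and then to estimate each diagonal term by hand using the cancellation of $b_Q$ together with the size/smoothness of $K_j$ and the weight. Concretely, I would first write
\[
\Big\|\sum_j T_j B_{j-s}\Big\|_{2,\omega}^2
= \sum_{i,j} \big\langle T_i B_{i-s}, T_j B_{j-s}\big\rangle_\omega
= \sum_{i,j} \big\langle (T_j^* T_i)_\omega B_{i-s}, B_{j-s}\big\rangle,
\]
where $(T_j^*T_i)_\omega$ denotes the adjoint of $T_i$ taken with respect to the measure $\omega\,dx$. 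The whole argument rests on two kernel estimates for these composite operators: a smoothness estimate for the diagonal blocks $(T_j^*T_j)_\omega$ (the bound labeled \eqref{e:hold smooth} in the introduction) and a smoothness/decay estimate for the off-diagonal blocks $(T_i^*T_j)_\omega$ with $|i-j|$ large (the bound \eqref{e:hold}); these are exactly the estimates the authors flag as the technical heart of the paper, and I would take them as the key lemmas to be proved in Section 3.

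For the diagonal terms, the strategy is the classical one: expand $B_{j-s} = \sum_{Q\in\mathfrak{Q}_{j-s}} b_Q$, use $\int b_Q = 0$ to subtract the value of the kernel of $(T_j^*T_j)_\omega$ at the center of $Q$, and gain a factor like $(l(Q)/2^j)^\delta = 2^{-\delta s}$ from the Hölder smoothness; combined with $\|b_Q\|_1 \lesssim (\lambda/\|a\|_\infty)|Q|$ from Lemma~\ref{l:CZ}(iv) and $\|a\|_\infty^2$ coming from the two copies of $T$, together with a bounded-overlap count of the cubes $Q$, this should yield a bound of the shape $C 2^{-\varepsilon s}\lambda\|a\|_\infty\|b\|_{1,\omega}$ after reinserting the weight. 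The role of the further dyadic decomposition of $\omega(x)=|x|^\alpha$ announced in the introduction is precisely to make "reinserting the weight" legitimate: on each dyadic annulus $\omega$ is essentially constant, so one can compare $\|b_Q\|_1$ with $\|b_Q\|_{1,\omega}$ up to the $A_1$ constant and sum.

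For the off-diagonal terms with $|i-j|\ge$ (some absolute constant), the plan is to use estimate \eqref{e:hold}: the composite kernel of $(T_i^*T_j)_\omega$ is small, with an extra decay factor geometric in $|i-j|$, coming from the disjointness of the frequency/scale supports of $K_i$ and $K_j$ together with their smoothness. Summing a geometric series in $|i-j|$ then controls the full off-diagonal sum by the same right-hand side, possibly after shrinking $\varepsilon$. The summation over $j$ itself is harmless because the cubes $Q$ in $\mathfrak{Q}_{j-s}$ attached to different $j$ have comparable sizes to the scale $2^j$ and one has the Carleson-type packing bound $\sum_j \|B_{j-s}\|_{1,\omega} \lesssim \|b\|_{1,\omega}$.

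The main obstacle is unquestionably establishing the two kernel estimates \eqref{e:hold smooth} and \eqref{e:hold} for the weighted composites $(T_j^*T_j)_\omega$ and $(T_i^*T_j)_\omega$. The operator $T_a$ is not a convolution operator and $m_{x,y}a$ mixes the two variables, so computing the adjoint with respect to $\omega\,dx$ introduces the factor $\omega(y)/\omega(x)$ into the kernel; differentiating this in the smoothness estimates forces one to control $\nabla(|y|^\alpha/|x|^\alpha)$, which is where the dyadic splitting of the power weight becomes essential and where the argument genuinely departs from the unweighted case of \cite{GH}. I would isolate this as Lemma (stated and proved in Section 3) and treat the present Lemma~\ref{e:main} as a relatively short deduction from it along the lines above.
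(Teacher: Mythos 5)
Your outline reproduces the paper's structure: expand $\big\|\sum_j T_jB_{j-s}\big\|_{2,\omega}^2$ as $\sum_{i,j}\langle (T_i^*T_j)_\omega B_{j-s},B_{i-s}\rangle$, isolate diagonal and near-diagonal pieces (the paper's terms $I$, $II$, $III$), use the cancellation of $b_Q$ against H\"older smoothness of the composite kernels, and flag (\ref{e:hold smooth}) and (\ref{e:hold}) as the technical core. That is indeed the argument, reduced to Lemmas \ref{e:diagonal} and \ref{e:cross}. Two points merit correction.

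First, the paper never passes to the $\omega\,dx$-adjoint nor introduces a factor $\omega(y)/\omega(x)$ into the kernel; it writes $\langle T_jB_{j-s},T_iB_{i-s}\rangle_\omega=\langle (T_i^*T_j)_\omega B_{j-s},B_{i-s}\rangle$ as an \emph{unweighted} pairing, with composite kernel
\[
K_{i,j}(y,x)=\int K_i(z-y)K_j(z-x)\,m_{x,z}a\cdot m_{y,z}a\cdot\omega(z)\,dz,
\]
so the weight sits inside the $z$-integral and the work is to dominate it by $\omega(y)$; that is where the further dyadic splitting of $|z|^\alpha$ enters, in the proofs of Lemmas \ref{1:f1} and \ref{1:f2}.

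Second, and more substantively: the off-diagonal sum over $j\le i-3$ is not controlled by a geometric series in $|i-j|$. The bound (\ref{e:hold}) is $|K_{i,j}(y,x)-K_{i,j}(y,x')|\le C\,2^{-2i}2^{-j/20}|x-x'|^{1/20}\omega(y)$, and once you pair with $b_Q$, $l(Q)=2^{j-s}$, the product $2^{-j/20}|x-x_Q|^{1/20}$ collapses to $2^{-s/20}$: every trace of $j$ in the smoothness gain disappears, leaving $2^{-s/20}2^{-2i}\omega(y)\|b_n\|_1$ with no residual decay in $|i-j|$. If you then tried to sum $\|B_{j-s}\|_1$ over all $j\le i-3$ you would diverge. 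What actually makes the inner $j$-sum converge (for fixed $i$ and $y$) is the spatial packing of the disjoint Whitney cubes: the cubes $Q_n\in\mathfrak{Q}_{j-s}$ that contribute to $(T_i^*T_j)_\omega B_{j-s}(y)$ all lie in $B(y,2^{i+1})$, so $\sum_{j\le i-3}\sum_{Q_n}\|b_n\|_1\le C\lambda\|a\|_\infty^{-1}2^{2i}$, which exactly cancels the $2^{-2i}$ from the kernel (the paper's (\ref{e:last})). Only the outermost sum over $i$ then uses the bound $\sum_i\|B_{i-s}\|_{1,\omega}=\|b\|_{1,\omega}$ you invoke. Keep the kernel lemmas and the diagonal argument, but replace the geometric-series mechanism by this disjointness/packing count; otherwise the off-diagonal part does not close.
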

The proof of Lemma \ref{e:main} will be stated below. We now explain that Theorem \ref{t:2} can be obtained by Lemma \ref{e:main} and Theorem \ref{t:1}. In fact, for any $f\in L^1(\om)$ and $\lam>0$, by Lemma \ref{l:CZ}, we have
\[\omega(\{|T_af(x)|>\lambda\})\leq \omega\big(\{|T_ag(x)|>\lambda/2\}\big)+\omega\big(\{|T_ab(x)|>\lambda/2\}\big).\]
Since $g\in L^2(\om)$, by Theorem \ref{t:1}, we have $\|T_ag\|_{2,\omega}\leq C\|a\|_\infty\|g\|_{2,\omega}$. Hence, by
Chebychev's inequality and Lemma \ref{l:CZ},
\[\omega(\{|T_ag(x)|>\lambda/2\})\leq 4\|T_ag\|_{2,\omega}^2/{\lambda ^2}\leq C\frac{\|a\|_\infty^2\lambda\|f\|_{1,\omega}}{\|a\|_\infty
\lambda^2}=C\|a\|_\infty\frac{\|f\|_{1,\omega}}{\lambda}.\]
Let $E^*=\bigcup2^{11}Q_n$. Then we have
\[\omega(\{|T_ab(x)|>\lambda/2\})\leq\omega(E^*)+\omega(\{x\in (E^*)^c:|T_ab(x)|>\lambda/2\}).\]
Since $\omega$ satisfies the doubling condition, the set $E^*$ satisfies
\begin{equation}\label{e:weak}
\omega(E^*)\leq C\omega(E)\leq C\frac{\|a\|_\infty}{\lambda}\|f\|_{1,\omega}.
\end{equation}
We write
\[T_ab(x)=\sum_{s\in\mathbb{Z}}\sum_{j\in\mathbb{Z}}T_jB_{j-s}.\]
Note that $T_jB_{j-s}(x)=0$, for $x\in (E^*)^c$ and $s<10$. Therefore
\[\omega\bigg(\bigg\{x\in (E^*)^c:T_ab(x)>\frac{\lambda}{2}\bigg\}\bigg)=\omega\bigg(\bigg\{x\in (E^*)^c:
\bigg|\sum_{s\geq10}\sum_{j\in \mathbb{Z}}T_jB_{j-s}(x)\bigg|>\frac{\lambda}{2}\bigg\}\bigg).\]
From Lemma \ref{e:main} we get
\[\big\|\sum_{s\geq 10}\sum_{j\in \mathbb{Z}}T_jB_{j-s}\big\|_{2,\omega}^2\leq\big(\sum_{s\geq10}\big\|\sum_{j\in \mathbb{Z}}T_jB_{j-s}\big\|_{2,\omega}\big)^2\leq C\lambda \|a|\|_\infty \|b\|_{1,\omega}.\]
By Chebychev's inequality, we have
\[\omega\bigg(\bigg\{x\in (E^*)^c:{\bigg|\sum_{s\geq10}}\sum_{j\in \mathbb{Z}}T_jB_{j-s}(x)\bigg|>\frac{\lambda}{2}\bigg\}\bigg)\leq
C\|a\|_\infty\frac{\|b\|_{1,\omega}}{\lambda}\leq C\|a\|_\infty\frac{\|f\|_{1,\omega}}{\lambda}.\]
Hence we get the conclusion of Theorem \ref{t:2}. Thus, to complete the proof of Theorem {\ref{t:2}}, it suffices to show
Lemma \ref{e:main} and Theorem \ref{t:1}. The proof of Theorem \ref{t:1} will be given in Section 4. Let us first
state Lemma \ref{e:main}. We write
\begin{equation*}
\begin{split}
&\mspace{20mu}\big\|\sum_{j\in\mathbb{Z}}T_jB_{j-s}\big\|_{2,\omega}^2=\sum_{i,j\in\mathbb{Z}}\langle T_jB_{j-s},T_iB_{i-s}\rangle_\omega
\\&=\sum_{j\in\mathbb{Z}}\|T_jB_{j-s}\|_{2,\omega}^2+2\sum_i\sum_{j=i-2}^{i-1}\langle
T_jB_{j-s},T_iB_{i-s}\rangle_\omega+
2\sum_{i\in\mathbb{Z}}\sum_{j\leq{i-3}}\langle T_jB_{j-s},T_iB_{i-s}\rangle_\omega\\
&=:I+II+III,
\end{split}
\end{equation*}
where $\langle u,v\rangle_\omega=\int u(x)v(x)\omega(x)dx$ for the  real valued functions $u$ and $v$.

Note that the estimate of $II$ can be reduced to $I$:
\begin{align*}
2\bigg|\sum_i\sum_{j=i-2}^{i-1}\langle T_jB_{j-s},T_iB_{i-s}\rangle_\omega\bigg|&\leq \sum_i\sum_{j=i-2}^{i-1}(\|T_jB_{j-s}\|_{2,\omega}^2+
\|T_iB_{i-s}\|_{2,\omega}^2)\\&\leq 4\sum_i\|T_iB_{i-s}\|^2_{2,\omega}.
\end{align*}
Hence, if we can establish the following lemma, then we may get the estimate of $I$ and $II$.
\begin{lemma}\label{e:diagonal}
There exists an $\varepsilon>0$ such that for any fixed $s\geq10$,
\begin{equation}\label{eq:diagonal}
\|T_jB_{j-s}\|_{2,\omega}^2\leq C2^{-\varepsilon s}\lambda\|a\|_\infty \|B_{j-s}\|_{1,\omega},
\end{equation}
where $C$ is a constant dependent on the properties of $K$.
\end{lemma}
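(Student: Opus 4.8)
The plan is to estimate the $L^2(\omega)$ norm of $T_jB_{j-s}$ by expanding the square as a double sum over the cubes $Q\in\mathfrak{Q}_{j-s}$ and exploiting the mutual (near) disjointness of the supports $T_jb_Q$. Write $B_{j-s}=\sum_{Q\in\mathfrak{Q}_{j-s}}b_Q$, so that
\begin{equation*}
\|T_jB_{j-s}\|_{2,\omega}^2=\sum_{Q,Q'\in\mathfrak{Q}_{j-s}}\langle T_jb_Q,T_jb_{Q'}\rangle_\omega .
\end{equation*}
Since $K_j$ is supported in the annulus $\{2^{j-2}\le|x|\le 2^j\}$ and each $Q\in\mathfrak{Q}_{j-s}$ has side length $2^{j-s}$ with $s\ge10$, the function $T_jb_Q$ is supported in a fixed dilate of $Q$ (roughly $C2^j$-neighborhood), so for a fixed $Q$ only $O(2^{sd})$ cubes $Q'$ have $T_jb_{Q'}$ overlapping $T_jb_Q$. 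Thus, after applying Cauchy--Schwarz on each pair, the double sum is controlled by $C\,2^{sd}\sum_{Q}\|T_jb_Q\|_{2,\omega}^2$, and everything reduces to a single-cube estimate.

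For the single-cube estimate I would bound $\|T_jb_Q\|_{2,\omega}^2$ using the cancellation $\int b_Q=0$ together with the smoothness of the kernel $K_j(x-y)m_{x,y}a$ in the $y$ variable. Writing out
\begin{equation*}
T_jb_Q(x)=\int\big(K_j(x-y)m_{x,y}a-K_j(x-c_Q)m_{x,c_Q}a\big)b_Q(y)\,dy,
\end{equation*}
where $c_Q$ is the center of $Q$, one estimates the difference of the kernels by $C\,2^{-j}\,2^{j-s}\,|x-c_Q|^{-d}\|a\|_\infty$ using $(\ref{e:K_1})$, $(\ref{e:K_3})$ and the elementary Lipschitz bound on $m_{x,y}a$ in $y$ (here one uses $\|a\|_\infty$, not smoothness of $a$, via the averaging in $m_{x,y}$ combined with the gradient bound on $K_j$). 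This yields a pointwise bound $|T_jb_Q(x)|\le C\,2^{-s}\|a\|_\infty\,\|b_Q\|_1\,|x-c_Q|^{-d}$ for $x$ outside $C2^j$-times $Q$, which after multiplying by the weight $\omega(x)=|x|^\alpha$ and integrating over the relevant region is square-summable. The crucial point is that integrating $|x-c_Q|^{-2d}|x|^\alpha$ over $\mathbb{R}^2\setminus C2^jQ$ (with $d=2$) converges and produces a factor comparable to $2^{-2j}\cdot(\text{local value of }\omega)$, so that one obtains $\|T_jb_Q\|_{2,\omega}^2\le C\,2^{-2s}\|a\|_\infty^2\,2^{-2j}\|b_Q\|_1^2\cdot\omega\text{-factor}$. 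Combining with $\|b_Q\|_1\le C\lambda\|a\|_\infty^{-1}|Q|= C\lambda\|a\|_\infty^{-1}2^{(j-s)d}$ from Lemma \ref{l:CZ}(iv) and with $\|b_Q\|_{1,\omega}\approx \lambda\|a\|_\infty^{-1}\omega(Q)$, the powers of $2^j$ and the weight factors must be arranged to telescope into $\lambda\|a\|_\infty\|B_{j-s}\|_{1,\omega}$ with a surplus $2^{-\varepsilon s}$.

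The main obstacle, and the place where the weight genuinely complicates matters compared to \cite{GH}, is controlling the interaction between the weight $|x|^\alpha$ and the cube $Q$ when $Q$ is far from the origin versus close to it: the quantities $\omega(Q)$, the value of $|x|^\alpha$ on $Q$, and the tail integral $\int_{(C2^jQ)^c}|x-c_Q|^{-2d}|x|^\alpha\,dx$ behave differently in the two regimes, and one cannot simply pull $\omega$ out as a constant. The remedy, as the introduction hints, is to decompose $\omega(x)=|x|^\alpha$ dyadically, $|x|^\alpha\approx\sum_{\ell}2^{\ell\alpha}\chi_{\{|x|\sim 2^\ell\}}$, and to estimate the contribution of each dyadic shell separately, using that $\alpha<0$ makes the shells near the origin dominate in a summable way and $\alpha>-d$ guarantees local integrability of the weight against the singular kernel tail. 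One then needs $\sum_Q$ (weighted single-cube bounds) $\lesssim 2^{-\varepsilon s}\lambda\|a\|_\infty\|B_{j-s}\|_{1,\omega}$; the extra $2^{-\varepsilon s}$ (any $\varepsilon<1$ should work, coming from the $2^{-s}$ gain in the kernel difference beating the $2^{sd}$ overlap count once $d=2$) is what ultimately makes the sum over $s\ge10$ in Lemma \ref{e:main} converge. I expect the bookkeeping of these dyadic weight pieces against the $2^j$-scaling to be the technical heart of the argument, and it is exactly the computation deferred to Section 3.
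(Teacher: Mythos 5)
Your reduction — expand $\|T_jB_{j-s}\|_{2,\omega}^2$ as $\sum_{Q,Q'}\langle T_jb_Q,T_jb_{Q'}\rangle_\omega$, use the finite overlap to get $C\,2^{sd}\sum_Q\|T_jb_Q\|_{2,\omega}^2$, then bound each cube — is a plausible outline, but it collapses at the first nontrivial step. You claim a pointwise kernel estimate
\begin{equation*}
\big|K_j(x-y)\,m_{x,y}a - K_j(x-c_Q)\,m_{x,c_Q}a\big|\ \le\ C\,2^{-j}\,2^{j-s}\,|x-c_Q|^{-d}\,\|a\|_\infty,
\end{equation*}
attributing it to an ``elementary Lipschitz bound on $m_{x,y}a$ in $y$.'' No such bound exists when $d\ge 2$ and $a$ is merely in $L^\infty$. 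Moving $y$ to $y'$ rotates the chord $[x,y]$ to $[x,y']$, and for a general bounded $a$ the line averages along two nearby rays from $x$ can differ by the full amount $2\|a\|_\infty$ no matter how small $|y-y'|$ is; $m_{x,\cdot}a$ is not even continuous in $y$. (In $d=1$ the two rays coincide, which is why the Calder\'on commutator kernel genuinely is Lipschitz in $y$; the loss of this feature for $d\ge 2$ is precisely what makes the weak $(1,1)$ question hard.) So the pointwise bound $|T_jb_Q(x)|\lesssim 2^{-s}\|a\|_\infty\|b_Q\|_1|x-c_Q|^{-d}$ is false, and nothing downstream of it can be rescued.

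The paper, following Grafakos--Honz\'ik, circumvents exactly this lack of pointwise regularity by never estimating $T_j$ directly: it writes $\|T_jB_{j-s}\|_{2,\omega}^2=\langle (T_j^*T_j)_\omega B_{j-s},B_{j-s}\rangle$ and works with the kernel $K_{j,j}(y,x)=\int K_j(z-y)K_j(z-x)\,m_{x,z}a\,m_{y,z}a\,\omega(z)\,dz$, which \emph{is} H\"older continuous in $x$ with the small exponent $1/20$ (estimate \eqref{e:hold smooth}). The crucial source of that regularity is the integration in $z$: after passing to polar coordinates and the coordinate change that converts the line averages of $a$ into area integrals over the triangles $A$ and $A'$, the difference $m_{x,z}a-m_{x',z}a$ is averaged over a two--parameter region rather than evaluated along single rays, and the symmetric difference $A\triangle A'$ has area $O(2^{j}|x-x'|)$. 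That averaging — not any pointwise smoothness of $m_{x,\cdot}a$ — is what produces the $|x-x'|^{1/20}$ gain and hence the $2^{-s/20}$ decay after pairing against $b_Q$ and using $\int b_Q=0$. Your instinct about the weight (dyadic decomposition of $|x|^\alpha$ and separate treatment of shells near and far from the origin) is indeed the paper's innovation over the unweighted case — it is handled in Lemma~\ref{1:f1} by $\omega(z)\le C\omega(y)$ when $y$ is away from the annulus $|y|\approx 2^j$, and in Lemma~\ref{1:f2} by the angular splitting $K_j=K_j^1+K_j^2$ when $|y|\approx 2^j$ — but the $T_j^*T_j$ smoothing must come first, and your proposal has no substitute for it.
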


To handle the cross terms $III$, we need the following conclusion:
\begin{lemma}\label{e:cross}
There exist $C$ , $\varepsilon>0$ such that
\[\bigg|\sum_{i\in\mathbb{Z}}\sum_{j\leq{i-3}}\langle T_jB_{j-s},T_iB_{i-s}\rangle_\omega\bigg|\leq C2^{-\varepsilon s}\lambda\|a\|_\infty\|b\|_{1,\om}\]
for any $s\geq10$.
\end{lemma}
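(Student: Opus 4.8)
\textbf{Proof proposal for Lemma \ref{e:cross}.}

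The plan is to exploit the almost-orthogonality coming from the frequency/scale separation $j\le i-3$ together with the geometric gain $2^{-\varepsilon s}$ produced by the Calder\'on--Zygmund cancellation of the blocks $B_{j-s}$, whose constituent bumps $b_Q$ have mean zero and are supported in cubes of side $2^{j-s}$, i.e.\ much smaller than the scale $2^j$ of the kernel $K_j$. First I would move the weight onto the operators by writing $\langle T_jB_{j-s},T_iB_{i-s}\rangle_\omega=\langle (T_i)_\omega^* T_j B_{j-s}, B_{i-s}\rangle$, where $(T_i)_\omega^*$ is the adjoint of $T_i$ with respect to the weighted pairing; the composition $(T_i)_\omega^* T_j$ has an integral kernel $H_{i,j}(u,v)=\int K_i(z-u)\,\overline{m_{z,u}a}\;\omega(z)\,K_j(z-v)\,m_{z,v}a\,dz$. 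The heart of the matter is the regularity estimate (\ref{e:hold}) advertised in the introduction: one must show that $H_{i,j}$ is H\"older continuous in $v$ at scale $2^{j-s}$ with a bound that decays like $2^{-\varepsilon s}$ times $2^{-\varepsilon|i-j|}$ (or at least summably in $i-j$), after accounting for the dyadic pieces of $\omega(z)=|z|^\alpha$. This is exactly where the ``further decomposition of the power weight according to the dyadic decomposition'' enters: on the annulus where $|z|\sim 2^\ell$ one replaces $\omega$ by the constant $2^{\ell\alpha}$ up to an acceptable error controlled by $|\nabla\omega|\sim 2^{\ell(\alpha-1)}$, and sums the resulting geometric series in $\ell$ using $-2<\alpha<0$.

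Next, using $\int b_Q=0$ for each $Q\in\mathfrak{Q}_{j-s}$, I would write
\[
\langle T_jB_{j-s},T_iB_{i-s}\rangle_\omega=\sum_{Q\in\mathfrak Q_{j-s}}\int\!\!\int \big(H_{i,j}(u,v)-H_{i,j}(u,c_Q)\big)\,b_Q(v)\,B_{i-s}(u)\,dv\,du,
\]
where $c_Q$ is the center of $Q$, and estimate the difference by the H\"older bound times $(2^{j-s}/2^{j})^\varepsilon=2^{-\varepsilon s}$. The $u$-integration against $B_{i-s}$ is then controlled by $\|B_{i-s}\|_1\lesssim \lambda\|a\|_\infty^{-1}\sum_{R\in\mathfrak Q_{i-s}}|R|$ using Lemma \ref{l:CZ}(iv), while the $v$-integration gives $\|b_Q\|_1\lesssim \lambda\|a\|_\infty^{-1}|Q|$; combined with the weight factors and the support constraints (the kernel $H_{i,j}$ forces $|u-v|\lesssim 2^i$, so only $O(2^{(i-j)d})$ cubes $Q$ interact with a given annular piece) one arrives at a bound of the form $2^{-\varepsilon s}\,2^{-\delta|i-j|}\lambda\|a\|_\infty\big(\text{weighted masses}\big)$. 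Summing first in $j\le i-3$ and then in $i\in\mathbb Z$, the geometric factor $2^{-\delta|i-j|}$ makes the double sum converge and collapses $\sum_i\|B_{i-s}\|_{1,\omega}\le\|b\|_{1,\omega}$, yielding the claimed bound $C2^{-\varepsilon s}\lambda\|a\|_\infty\|b\|_{1,\omega}$.

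The main obstacle, and the step I expect to occupy most of the work, is proving the weighted kernel regularity (\ref{e:hold}) for $H_{i,j}$ with a bound uniform in the relative scale $i-j$ and with honest decay in $s$: one has to differentiate under the integral in $v$, track the interaction of $\nabla K_j$ (size $2^{-j(d+1)}$ on $|z-v|\sim 2^j$) with the term $m_{z,v}a$ (bounded by $\|a\|_\infty$, but its $v$-derivative only by $\|a\|_\infty/|z-v|$, so one cannot simply differentiate it and must instead use the telescoping/mean-zero structure), and simultaneously handle the non-smoothness and unboundedness of $|z|^\alpha$ near the origin by the dyadic splitting of $\omega$, checking that the contribution of the annulus containing the origin is still summable because $\alpha>-2=-d$. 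A secondary technical point is that $m_{z,u}a$ and $m_{z,v}a$ couple the two kernels through the common variable $z$, so the cancellation in $v$ must be extracted \emph{after} fixing $z$ and $u$; organizing the estimate so that the $b_Q$-cancellation is applied to the genuinely regular part while the rough factors are merely bounded in $L^\infty$ is the key bookkeeping device, entirely parallel to the unweighted argument of Grafakos--Honz\'ik \cite{GH} but with the extra weighted layer.
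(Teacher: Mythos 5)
Your reduction to the weighted kernel $H_{i,j}=K_{i,j}$, the use of the mean--zero property of $b_Q$ to bring in a H\"older difference of the kernel at scale $2^{j-s}$, and the final collapse $\sum_i\|B_{i-s}\|_{1,\omega}\le\|b\|_{1,\omega}$ all match the paper. The genuine gap is in the mechanism you rely on to sum over $j\le i-3$: you assert that the regularity estimate for $H_{i,j}$, combined with the cube count ``$O(2^{(i-j)d})$ cubes $Q$ interact,'' produces a factor $2^{-\delta|i-j|}$. It does not. The paper's actual bound \eqref{e:hold} is $|K_{i,j}(y,x)-K_{i,j}(y,x')|\le C2^{-2i}2^{-j/20}|x-x'|^{1/20}\omega(y)$, and plugging in $|x-x'|\sim 2^{j-s}$ turns the $j$-dependence into $2^{-2i}2^{-s/20}\omega(y)$, with no decay in $|i-j|$. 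If you now bound the mass by counting: $(\text{number of cubes})\times(\text{mass per cube})\lesssim 2^{2(i-j+s)}\cdot 2^{2(j-s)}\lambda/\|a\|_\infty=2^{2i}\lambda/\|a\|_\infty$, which is again independent of $j$, so the sum over $j\le i-3$ diverges. The missing idea is that one should \emph{not} count cubes at each fixed scale and then sum; rather, one exploits that the $Q_n$'s across all $j$ are pairwise disjoint (they come from a single Whitney/Calder\'on--Zygmund decomposition), so
\[
\sum_{j\le i-3}\ \sum_{\substack{Q_n\in\mathfrak{Q}_{j-s}\\ Q_n\subset B(y,2^{i+1})}}\|b_n\|_1\ \lesssim\ \frac{\lambda}{\|a\|_\infty}\sum_{j\le i-3}\ \sum_{\substack{Q_n\in\mathfrak{Q}_{j-s}\\ Q_n\subset B(y,2^{i+1})}}|Q_n|\ \lesssim\ \frac{\lambda}{\|a\|_\infty}\,2^{2i},
\]
because the disjoint cubes all sit inside a single disc of radius $2^{i+1}$. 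This disjointness across scales is what replaces the geometric factor $2^{-\delta|i-j|}$ you hoped for, and it is the reason the paper proves a single pointwise estimate $\bigl|\sum_{j\le i-3}(T_i^*T_j)_\omega B_{j-s}(y)\bigr|\le C2^{-\varepsilon s}\lambda\|a\|_\infty\omega(y)$ for each fixed $i$ (Lemma~\ref{e:cross1}), rather than term-by-term estimates in $(i,j)$.

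A secondary point: your plan handles the singularity of $\omega(z)=|z|^\alpha$ solely through dyadic annular decomposition of $z$. The paper treats the regime $2^{j-3}\le|y|\le2^{j+1}$ separately, because there the domain $|z-y|\sim 2^j$ can pass through the origin where $\omega$ blows up, and the comparison $\omega(z)\lesssim\omega(y)$ fails. It splits $K_i$ into a piece supported near the singular direction (width $2^{-\beta s}$, small enough to absorb the unbounded weight) and a complementary piece on which $\omega(z)\lesssim 2^{-\alpha\beta s}\omega(y)$, with $\beta$ tuned so that the $s$-gain survives. Your annular decomposition could in principle achieve the same, but you would still need to choose the cut depending on $s$ to actually produce the $2^{-\varepsilon s}$ factor, and that step is not spelled out.
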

So, to get Lemma \ref{e:main}, it remains to prove Lemma \ref{e:diagonal} and Lemma \ref{e:cross}, which will be given in the following section.

\vspace{4mm}

\section{Proofs of lemma \ref{e:diagonal} and lemma \ref{e:cross}}

\hspace{-0.7cm}{\bf 3.1 Proof of Lemma \ref{e:diagonal}}

First let us consider Lemma \ref{e:diagonal}.
For any $i,j\in \mathbb{Z}$, we write
\[\langle T_jB_{j-s},T_iB_{i-s}\rangle_\omega =\langle(T_i^*T_j)_\omega B_{j-s},B_{i-s}\rangle,\]
where $(T_i^*T_j)_\omega$ has the kernel
\Be\label{1e:k_ij}
K_{i,j}(y,x)=\int K_i(z-y)K_j(z-x)m_{x,z}a\cdot m_{y,z}a\cdot\omega(z)dz.
\Ee
Hence we can write
\[\|T_jB_{j-s}\|_{2,\om}^2=\langle(T_j^*T_j)_\omega B_{j-s},B_{j-s}\rangle,\]
It is easy to see that the following two lemmas are the key to proving Lemma \ref{e:diagonal}.
\begin{lemma}\label{1:f1}For $|y|>2^{j+1}$ or $|y|<2^{j-3}$, there exist  $C$ , $\varepsilon>0$ such that
\[|(T_j^*T_j)_{\omega}B_{j-s}(y)|\leq C2^{-\varepsilon s}\lambda\|a\|_\infty\omega(y)\]
for any integer $s\geq 10$.
\end{lemma}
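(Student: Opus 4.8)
The plan is to exploit the support condition forced on $B_{j-s}$ by the operator $(T_j^*T_j)_\omega$ together with the localization in the kernel $K_j$, so that the pointwise bound becomes an essentially elementary size estimate. First I would recall that $B_{j-s}=\sum_{Q\in\mathfrak{Q}_{j-s}}b_Q$, so each cube $Q$ appearing has side length $2^{j-s}$; since $s\geq 10$ these cubes are much smaller than the annulus $\{|z|\sim 2^j\}$ on which $K_j$ lives. From the formula (\ref{1e:k_ij}) with $i=j$,
\[
(T_j^*T_j)_\omega B_{j-s}(y)=\int K_{j,j}(y,x)B_{j-s}(x)\,dx,\qquad
K_{j,j}(y,x)=\int K_j(z-y)K_j(z-x)m_{x,z}a\cdot m_{y,z}a\cdot\omega(z)\,dz,
\]
the inner integral is supported where $|z-y|\sim 2^j$ and $|z-x|\sim 2^j$; together with $x\in\supp B_{j-s}$ this pins $|z|$ to a region of size comparable to $2^j$ and forces $|y|$ to lie in a fixed dyadic range around $2^j$. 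Thus the hypothesis $|y|>2^{j+1}$ or $|y|<2^{j-3}$ means that for such $y$ the integrand either vanishes or the relevant $z$-region is pushed away, giving us room to extract decay.

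The key steps, in order, would be: (1) fix $y$ outside the "natural" annulus and note that the $z$-integral in $K_{j,j}(y,x)$ is then taken over a set of measure $\lesssim 2^{jd}$ but whose distance to the origin is comparable to $2^j$, so that $\omega(z)=|z|^\alpha\sim 2^{j\alpha}$ there — this is exactly the dyadic decomposition of the power weight alluded to in the introduction; (2) bound $|K_j(z-y)|\,|K_j(z-x)|\lesssim 2^{-2jd}$ by (\ref{e:K_1}), use $|m_{x,z}a|,|m_{y,z}a|\leq\|a\|_\infty$, and integrate in $z$ to get $|K_{j,j}(y,x)|\lesssim \|a\|_\infty^2 2^{-jd}\cdot 2^{j\alpha}$ with the same localization in $y$; (3) integrate against $B_{j-s}$ using $\|B_{j-s}\|_1\leq\sum_{Q\in\mathfrak Q_{j-s}}\|b_Q\|_1\lesssim (\lambda/\|a\|_\infty)\sum|Q|$ from Lemma \ref{l:CZ}(iv), and observe that the cubes in $\mathfrak Q_{j-s}$ contributing to a nonzero integrand at this $y$ all lie within distance $O(2^j)$ of $y$, so their total measure is $\lesssim 2^{jd}$; (4) assemble: $|(T_j^*T_j)_\omega B_{j-s}(y)|\lesssim \|a\|_\infty^2 2^{-jd}2^{j\alpha}\cdot(\lambda/\|a\|_\infty)2^{jd}=\lambda\|a\|_\infty 2^{j\alpha}\sim\lambda\|a\|_\infty\omega(y)$, the last step using $|y|\sim 2^j$ when the expression is nonzero; (5) produce the gain $2^{-\varepsilon s}$ from the cancellation $\int b_Q=0$: one of the two factors $K_j(z-x)$ or the weight $\omega(z)$ can be replaced by its difference with the value at the center of $Q$, which by (\ref{e:K_3}) (for $K$) and by the Hölder regularity $|\,|z|^\alpha-|z_Q|^\alpha|\lesssim 2^{-s}|z|^\alpha$ when $|z-z_Q|\lesssim 2^{j-s}$ and $|z|\sim 2^j$ costs a factor $2^{-s}$ (so $\varepsilon=1$ suffices, or any $\varepsilon\le 1$), after which the size estimate above goes through with the extra $2^{-\varepsilon s}$.

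The main obstacle I anticipate is step (5), controlling the weight difference across a small cube in a way that is uniform in $j$: one must check that $|\,|z|^\alpha-|z_Q|^\alpha|\lesssim 2^{-s}\,|z|^\alpha$ holds on the relevant region, which is true precisely because on $\supp K_j$ the point $z-x$ (hence $z$, since $|x|\lesssim 2^j$ too once $x$ is forced near $y$) stays at distance $\sim 2^j\gg 2^{j-s}=l(Q)$ from the origin, so the weight is slowly varying at the scale of $Q$; the borderline case is when $z_Q$ is close to $0$, but then the support conditions $|z-y|\sim 2^j$ and $|y|$ outside the annulus already kill the integrand, so the delicate interplay is exactly between the hypothesis on $y$ and the non-degeneracy of $|z|$. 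Once this is in hand, the remaining estimates are the routine size bounds sketched above, and the two cases $|y|>2^{j+1}$ and $|y|<2^{j-3}$ are handled identically, the only difference being on which side the annulus $\{|z-y|\sim 2^j\}$ fails to meet $\supp B_{j-s}$ in a way that allows the weight to be comparable to $\omega(y)$.
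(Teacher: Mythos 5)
There is a genuine gap in the proposal, and it is at precisely the place where all the work in the paper's proof actually happens. To extract the gain $2^{-\varepsilon s}$ from $\int b_Q=0$ you must control
\[
K_{j,j}(y,x)-K_{j,j}(y,x_Q)=\int \bigl[K_j(z-x)\,m_{x,z}a - K_j(z-x_Q)\,m_{x_Q,z}a\bigr]K_j(z-y)\,m_{y,z}a\,\omega(z)\,dz .
\]
Splitting the bracket gives two pieces: one with $K_j(z-x)-K_j(z-x_Q)$, which your step (5) correctly handles via the smoothness of $K_j$, and one with $m_{x,z}a-m_{x_Q,z}a$, which your proposal never addresses. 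Since $a$ is merely $L^\infty$, the segment average $x\mapsto m_{x,z}a$ has no useful pointwise modulus of continuity, so this second term cannot be bounded by a naive difference argument. The paper (following Grafakos--Honz\'ik) deals with it by passing to polar coordinates $z=y+r\theta$, separating a small exceptional arc of directions near $\pm(x-y)/|x-y|$, and on the complement making a change of variables that turns the segment average into an area integral over a triangle with apex $x$; the gain then comes from a delicate estimate of the symmetric difference of the two triangles $A,A'$. This yields only a H\"older $1/20$ smoothness of $K_{j,j}$ in $x$, not the Lipschitz gain $\varepsilon=1$ you claim; indeed the argument is two-dimensional and would not work for general $d$, which is a signal that a purely ``size plus Lipschitz'' route is too optimistic.

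A secondary confusion: you propose to replace ``$\omega(z)$ by its difference with the value at the center of $Q$'' and invoke $\bigl||z|^\alpha-|z_Q|^\alpha\bigr|\lesssim 2^{-s}|z|^\alpha$, but $\omega(z)=|z|^\alpha$ does not depend on $x$, so the mean-zero property of $b_Q$ in $x$ gives no cancellation against the weight, and ``$z_Q$'' is not a well-defined object here. The place where a weight difference genuinely appears in the paper is \emph{after} the coordinate transform, in the term $G_2$ comparing $\omega(y+r\theta)$ to $\omega(y+r'\theta)$ via the smallness of $|r-r'|$. Also, your steps (1)--(4) correctly give the size bound $\lesssim\lambda\|a\|_\infty\omega(y)$ with no $s$-decay, but that is only one half of the paper's $J_1$/$J_2$ split: in the paper the near-region $J_2$ gets its $2^{-s/5}$ purely from the smallness of the region where $|y-x|<10\cdot 2^{\frac{9}{10}j}|x-x_{Q_n}|^{1/10}$, while the far-region $J_1$ uses the H\"older smoothness \eqref{e:hold smooth}. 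Without the geometric argument for $m_{x,z}a-m_{x_Q,z}a$, the proposed proof does not close.
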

\begin{lemma}\label{1:f2}For $2^{j-3}\leq|y|\leq 2^{j+1}$, there exist $C$ , $\varepsilon>0$  such that
\[|(T_j^*T_j)_{\omega}B_{j-s}(y)|\leq C2^{-\varepsilon s}\lambda\|a\|_\infty\omega(y)\]
for any integer $s\geq 10$.
\end{lemma}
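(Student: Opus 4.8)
\noindent\emph{Plan of proof of Lemma \ref{1:f2}.}
The strategy is to extract the decay $2^{-\varepsilon s}$ from the cancellation $\int b_Q=0$ of each Whitney block $Q\in\mathfrak{Q}_{j-s}$ played against the regularity of the map $x\mapsto K_{j,j}(y,x)$, the point being that the side length $2^{j-s}$ of $Q$ is small compared to the scale $2^{j}$ of $K_j$. Writing $(T_j^*T_j)_\omega B_{j-s}(y)=\sum_{Q\in\mathfrak{Q}_{j-s}}\int_Q K_{j,j}(y,x)b_Q(x)\,dx$ and subtracting, thanks to $\int b_Q=0$, the value of $K_{j,j}(y,\cdot)$ at the center $c_Q$ of $Q$, one gets
\[(T_j^*T_j)_\omega B_{j-s}(y)=\sum_{Q\in\mathfrak{Q}_{j-s}}\int_Q\big(K_{j,j}(y,x)-K_{j,j}(y,c_Q)\big)b_Q(x)\,dx.\]
Everything then reduces to the H\"older--type kernel estimate (this is (\ref{e:hold smooth}) alluded to in the introduction): for some $\delta\in(0,1]$, whenever $2^{j-3}\le|y|\le 2^{j+1}$ and $|x-x'|\le 2^{j-2}$,
\[\big|K_{j,j}(y,x)-K_{j,j}(y,x')\big|\le C\|a\|_\infty^2\Big(\tfrac{|x-x'|}{2^{j}}\Big)^{\delta}\,2^{-2j}\,\omega(y).\]
Granting this, and using $\|b_Q\|_1\le C\lambda\|a\|_\infty^{-1}|Q|$ together with $|x-c_Q|\le\sqrt2\,2^{j-s}$ on $Q$, each summand is at most $C\,2^{-\delta s}\lambda\|a\|_\infty\,2^{-2j}\omega(y)\,|Q|$; since $K_{j,j}(y,\cdot)$ vanishes outside $\{x:|x-y|\le 2^{j+1}\}$, the only cubes that contribute meet this ball, hence lie in $B(y,2^{j+2})$, so $\sum_Q|Q|\le C2^{2j}$. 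Summing yields $|(T_j^*T_j)_\omega B_{j-s}(y)|\le C2^{-\delta s}\lambda\|a\|_\infty\omega(y)$, i.e.\ the lemma with $\varepsilon=\delta$.

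To prove the kernel estimate I would insert the definition (\ref{1e:k_ij}) and split the increment $K_{j,j}(y,x)-K_{j,j}(y,x')$ into the contribution where the difference falls on $K_j(z-x)$ and the one where it falls on the average $m_{x,z}a$. The first is elementary: $|K_j(z-x)-K_j(z-x')|\le C\min(2^{-2j},2^{-3j}|x-x'|)$ by (\ref{e:K_3}) and the support of $\phi_j$, while $|m_{\cdot,z}a|\le\|a\|_\infty$ and $\int_{|z-y|\le 2^{j}}\omega(z)\,dz\le C2^{j(2+\alpha)}$ — finite because $2+\alpha>0$, and comparable to $2^{2j}\omega(y)$ since $\omega(y)\approx 2^{j\alpha}$ on the range $2^{j-3}\le|y|\le2^{j+1}$ — which produces the claimed bound with $\delta=1$. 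The genuine difficulty is the second contribution, since $m_{x,z}a=\int_0^1 a((1-t)x+tz)\,dt$ is not differentiable in $x$ when $a$ is merely bounded. Here the plan is to follow the Christ--Journ\'e device: in the $z$--integral change variables, for $t$ bounded away from the endpoints, via $u=(1-t)x+tz$ (so $dz=t^{-d}du$), turning the segment average into a genuine solid average of $a$ whose $x$--dependence then passes only through the smooth functions $K_j$ and $\phi_j$; the increment in $x$ is then estimated by their gradients, and the delicate range of $t$ near $0$ is controlled with the aid of the cancellation (\ref{e:K_2}), which gives $\int K_j=0$ because $\phi$ is radial. This yields the same bound up to an admissible logarithmic loss, removed by interpolating against the trivial estimate $|K_{j,j}(y,x)|\le C\|a\|_\infty^2 2^{-2j}\omega(y)$, producing a genuine exponent $\delta\in(0,1)$.

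One further issue pervades the previous step: the weight $\omega(z)=|z|^\alpha$ is integrated over a region of diameter $\approx 2^{j}$ containing the origin, so its singularity at $z=0$ cannot be ignored. Following the decomposition idea emphasized by the authors, I would split $\omega=\sum_{m\le j+2}\omega\,\psi_m$, where $\psi_m$ is a smooth bump adapted to $|z|\approx 2^{m}$; on the $m$--th piece $\omega\psi_m$ equals $2^{m\alpha}$ times a normalized bump, contributing $\approx 2^{m(2+\alpha)}2^{-4j}\|a\|_\infty^2$, and the series $\sum_{m\le j+2}2^{m(2+\alpha)}\le C2^{j(2+\alpha)}$ converges precisely because $2+\alpha>0$. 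The pieces with $2^{m}\approx 2^{j}$ reproduce the constant--weight kernel estimate of Grafakos--Honz\'\i k \cite{GH}, while the pieces with $m$ small carry the extra smoothness one needs there, since on them $\omega\psi_m$ is smooth at the scale $2^{j}$.

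The main obstacle is the regularity estimate (\ref{e:hold smooth}): establishing H\"older continuity of $x\mapsto K_{j,j}(y,x)$ under the sole assumption $a\in L^\infty$, and in particular making the Christ--Journ\'e change of variables interact cleanly with the dyadic splitting of the weight. Once that estimate is in hand the rest is bookkeeping; this is exactly the point the authors describe as more complicated than in the unweighted case.
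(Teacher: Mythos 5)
Your reduction to a H\"older estimate on $x\mapsto K_{j,j}(y,x)$ is the natural first move, but the estimate you state for the range $2^{j-3}\le|y|\le 2^{j+1}$ is a genuinely new claim, not an instance of (\ref{e:hold smooth}). That estimate is proved in the paper only for $|y|<2^{j-3}$ or $|y|>2^{j+1}$, because the argument there uses $\omega(z)\le C\omega(y)$ and $|\nabla\omega(z)|\le C\omega(y)/2^{j}$ for $z$ in the annulus $2^{j-2}\le|z-y|\le 2^{j}$; both fail when $2^{j-3}\le|y|\le 2^{j+1}$ and $z$ can approach the origin. The paper does not attempt a uniform H\"older bound in this regime. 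Instead it splits $K_j=K_j^1+K_j^2$ at an angular scale $2^{-\beta s}$ depending on $s$, handles $K^1_{j,j}$ by a small pointwise bound (no cancellation; the gain $2^{-\beta s}$ comes from the small measure of the arc, paired with $\int|B_{j-s}|\le C\lambda 2^{2j}/\|a\|_\infty$), and only runs the Lemma~\ref{1:f1} machinery on $K^2_{j,j}$, there accepting a quantified loss $\omega(z)\le C2^{-\alpha\beta s}\omega(y)$ and $|\nabla\omega(z)|\le C2^{\beta s(1-\alpha)}\omega(y)/2^{j}$, with $\beta=\tfrac{1}{10(1-\alpha)}$ chosen so the $2^{-s/5}$ gain still wins.

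The dyadic splitting $\omega=\sum_{m\le j+2}\omega\psi_m$ is not by itself a substitute. The claim that for small $m$ the piece $\omega\psi_m$ ``is smooth at the scale $2^j$'' is incorrect: $\|\nabla(\omega\psi_m)\|_\infty\approx 2^{m(\alpha-1)}$, which is much larger than $2^{m\alpha}/2^{j}$ when $m\ll j$, and this blow-up is exactly what obstructs the $G_2$-type term $\iint\psi(r')\,a\,\big(\omega(y+r\theta)-\omega(y+r'\theta)\big)\,\tfrac{du\,dv}{|x_2-v|}$ in the Christ--Journ\'e step. A uniform H\"older estimate with a small exponent $\delta(\alpha)>0$ does appear to hold, but proving it amounts to balancing the crude bound $C2^{-4j}\|a\|_\infty^2\epsilon^{2+\alpha}$ for the contribution of $\{|z|<\epsilon\}$ against the Lemma~\ref{1:f1} argument (with loss $\epsilon^{\alpha}$) on $\{|z|\ge\epsilon\}$ and then optimizing over $\epsilon$ --- in effect the paper's $K^1/K^2$ split in disguise; your sketch names the obstacle but does not carry out this balancing step. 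One minor additional point: (\ref{e:hold smooth}) is established only under the auxiliary restriction $|x-y|>10|x-x'|^{1/10}2^{9j/10}$, so even in the good regime the paper's application splits the sum over cubes into $J_1$ and $J_2$; your plan subtracts $K_{j,j}(y,c_Q)$ for every $Q$ and would need the same near-diagonal adjustment.
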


\hspace{-0.7cm}\emph{Proof of Lemma}\ref{1:f1}: We claim that the kernel $K_{j,j}$ which is given by (\ref{1e:k_ij}) has the
 H\"older smoothness:
\begin{equation}\label{e:hold smooth}
|K_{j,j}(y,x)-K_{j,j}(y,x')|\leq C2^{\frac{19}{20}j}|x-x'|^{\frac{1}{20}}2^{-3j}\|a\|_\infty^2\omega(y),
\end{equation}
for any $|x-x'|\leq 2^{j-10}$. Once we establish (\ref{e:hold smooth}), we can get Lemma \ref{1:f1}. In fact, write
\begin{align*}
&\mspace{20mu}\Big|\int K_{j,j}(y,x)B_{j-s}(x)dx\Big|
=\Big|\sum_{Q_n\in \mathfrak{Q}_{j-s}}\int K_{j,j}(y,x)b_n(x)dx\Big|\\
&\leq\Big|\sum_{Q_n\in\mathfrak{Q}_{j-s}}\int_{|y-x|>10\cdot2^{\frac{9}{10}j}|x-x_{Q_n}|^{\frac{1}{10}}}(K_{j,j}(y,x)-K_{j,j}(y,x_{Q_n}))b_n(x)dx\Big|\\
&\mspace{20mu}+\Big|\sum_{Q_n\in\mathfrak{Q}_{j-s}}\int_{|y-x|<10\cdot2^{\frac{9}{10}j}|x-x_{Q_n}|^{\frac{1}{10}}}(K_{j,j}(y,x)-K_{j,j}(y,x_{Q_n}))b_n(x)dx\Big|\\
&=:J_1+J_2,
\end{align*}
where $x_{Q_n}$ denotes the center of $Q_n$. For $J_1$, by using the H\"older smoothness (\ref{e:hold smooth}) we have
\begin{align*}
J_1&\leq\sum_{Q_n\in \mathfrak{Q}_{j-s}}c2^{\frac{19}{20}j-3j}\|a\|_\infty^2\omega(y)\int |x-x_{Q_n}|^{\frac{1}{20}}|b_n(x)|dx\\
&\leq C2^{-2j}2^{-\frac{s}{20}}\omega(y)\lambda\|a\|_\infty\sum_{dist(Q_n,y)\leq2^{j+1}}|Q_n|\\
&\leq C2^{-\frac{s}{20}}\omega(y)\lambda\|a\|_\infty.
\end{align*}
For $J_2$, we have
\begin{equation*}
J_2\leq C2^{-2j}\omega(y)\lambda\|a\|_\infty\sum_{dist(Q_n,y)\leq10\cdot2^{j-\frac{s}{10}}}|Q_n|
\leq C2^{-\frac{s}{5}}\|a\|_\infty\lambda\omega(y).
\end{equation*}
Then we can choose $\varepsilon=\frac{1}{20}$.
To obtain the kernel's H\"older smoothness (\ref{e:hold smooth}), write
\[K_{j,j}(y,x)-K_{j,j}(y,x')=: A_1+A_2,\]
where
\begin{equation*}
A_1=\int (K_j(z-x)-K_j(z-x'))K_j(z-y)m_{x,z}a\cdot m_{y,z}a\cdot\omega(z)dz
\end{equation*}
and
\begin{equation*}
A_2=\int K_j(z-x')K_j(z-y)(m_{x,z}a-m_{x',z}a)m_{y,z}a\cdot\omega(z)dz.
\end{equation*}
Note that $2^{j-2}\leq|z-y|\leq2^j$. Then $|z|\geq C\cdot \max\{|y|,2^{j-3}\}$ when $|y|<2^{j-3}$ or $|y|>2^{j+1}$. Thus, $\omega(z)\leq C\omega(y)$. Since $K_j$ is a smooth function with compact support, we have
\[|K_j(z-x)-K_j(z-x')|=\Big|\int_0^1\langle x'-x,\nabla K_j(z-(1-s)x'-sx)\rangle ds\Big|\leq C2^{-3j}|x-x'|.\]
Therefore
\begin{equation}\label{e:A}
|A_1|\leq C2^{-3j}\|a\|_\infty^2\omega(y)|x-x'|.
\end{equation}
To estimate $A_2$, we switch to polar coordinates $z=y+r\theta$, then
\begin{equation}\label{e:A_1}
A_2=\int_{\mathbb{S}^1}\int_{2^{j-2}}^{2^j}\psi(r)(m_{x,y+r\theta}a-m_{x',y+r\theta}a)
\omega(y+r\theta)drd\theta,
\end{equation}
where
\begin{equation*}
\psi(r)=K_j(y-x'+r\theta)K_j(r\theta)m_{y,y+r\theta}a\cdot r=K_j(y-x'+r\theta)K_j(r\theta)\int_0^r a(y+s\theta)ds.
\end{equation*}
It is easy to see that $\|\psi\|_\infty\leq C2^{-3j}\|a\|_\infty$ and $\|\psi'\|_\infty\leq C2^{-4j}\|a\|_\infty$.

We first split the integral over $\mathbb{S}^1$ as a sum over the arc
\[\bigg|\theta\pm\frac{x-y}{|x-y|}\bigg|<t_0\]
and its complement, $t_0$ will be chosen later as $C2^{-\frac{1}{10}j}|x-x'|^{\frac{1}{10}}$. Therefore the part of the integral in (\ref{e:A_1}) over this arc is bounded by $C2^{-2j}\|a\|_\infty^2t_0\omega(y)$.

Now we reduce $A_2$ to estimate the part of the outer integral in (\ref{e:A_1}) over the set
\[\bigg|\theta\pm\frac{x-y}{|x-y|}\bigg|\geq t_0.\]
By a rotation, without loss of generality, we can assume that $\theta=(1,0)$,
\begin{align*}
N&=\int_{2^{j-2}}^{2^j}\psi(r)\int_0^1a(x+s((r,0)-x+y))\omega(y+r(1,0))dsdr\\
&\mspace{20mu}-\int_{2^{j-2}}^{2^j}\psi(r')\int_0^1a(x'+s((r',0)-x'+y))\omega(y+r'(1,0))dsdr'.
\end{align*}
We make a coordinate transform. For the first term in the above integral, set $$u=x_1+s(r-x_1+y_1)\quad \text{and}\quad v=x_2+s(-x_2+y_2),$$ then  $r=\frac{u-x_1}{v-x_2}(y_2-x_2)-y_1+x_1$. For the second term, let
$$u=x'_1+s(r'-x'_1+y_1)\quad \text{and}\quad v=x_2+s(-x'_2+y_2),$$ then $r'=\frac{u-x'_1}{v-x'_2}(y_2-x'_2)-y_1+x'_1$. Therefore, we have
\begin{equation*}
N=\iint_A\psi(r)a(u,v)\omega(y+r(1,0))\frac{dudv}{|x_2-v|}-
\iint_{A'}\psi(r')a(u,v)\omega(y+r'(1,0))\frac{dudv}{|x'_2-v|},
\end{equation*}
where $A$ is the triangle with vertices $\{y+(2^{j-2},0),y+(2^j,0),x\}$ and $A'$ is the triangle with vertices $\{y+(2^{j-2},0),y+(2^j,0),x'\}$.
By symmetric, we may assume $x_2>y_2$. Observe that
\[\iint_A\frac{dudv}{|x_2-v|}=\int_{y_2}^{x_2}\frac{3\cdot2^j}{4}\frac{x_2-v}{x_2-y_2}
\frac{dv}{x_2-v}=\frac{3}{4}2^j.\]
Now we assume that
$$|x-x'|\leq2^{j-10}\quad \text{and}\quad  |x-y|>10|x-x'|^\frac{1}{10}2^{\frac{9}{10}j}$$ and set
$t_0=10|x-x'|^\frac{1}{10}2^{-\frac{1}{10}j}$. Since $\big|(1,0)\pm\frac{x-y}{|x-y|}\big|\geq t_0$ and
$t_0$ is small relative to
 $2^j$, we have $$\frac{|x_2-y_2|}{|x-y|}>\frac{1}{10}t_0=|x-x'|^{\frac{1}{10}}2^{-\frac{1}{10}j}.$$ Then
 $|x_2-y_2|\geq10|x-x'|^{\frac{1}{5}}2^{\frac{4}{5}j}$. By using an analogous method we obtain
 $|x'_2-y_2|\geq9|x-x'|^\frac{1}{5}2^{\frac{4}{5}j}$.
Using polar coordinate transform we get
\begin{align*}
\Big|\iint_{A\bigcap B(x,2^{\frac{3}{4}j}|x-x'|^\frac{1}{4})}\frac{dudv}{|x_2-v|}\Big|&\leq\iint_0^{2^{\frac{3}{4}j}
|x-x'|^\frac{1}{4}}
\frac{drd\theta}{|\sin\theta|}\\&\leq C2^{\frac{3}{4}j}|x-x'|^{\frac{1}{4}}\frac{2^j}{|x_2-y_2|}\leq C2^{\frac{19}{20}j}|x-x'|^{\frac{1}{20}},
\end{align*}
where the angle $\theta$ is between vector $v-x$ and $(1,0)$ and the second inequality comes from the geometry estimate $|\sin\theta|\geq C\frac{x_2-y_2}{2^j}$. So we have
\begin{equation}
\Big|\iint\limits_{A\bigcap B(x,2^{\frac{3}{4}j}|x-x'|^\frac{1}{4})}\psi(r)a(u,v)\omega(y+(r,0))\frac{dudv}{|x_2-v|}\Big|\leq C2^{-3j}\|a\|_\infty^2\omega(y)2^{\frac{19}{20}j}|x-x'|^{\frac{1}{20}}.
\end{equation}
Since $A'\bigcap B(x,2^{\frac{3}{4}j}|x-x'|^\frac{1}{4})\subseteq A'\bigcap B(x',2\cdot2^{\frac{3}{4}j}|x-x'|^\frac{1}{4})$, by an analogous method we also have the estimate
\begin{equation}
\bigg|\iint\limits_{A'\bigcap B(x,2^{\frac{3}{4}j}|x-x'|^\frac{1}{4})}\psi(r')a(u,v)\omega(y+(r',0))
\frac{dudv}{|x_2-v|}\bigg|\leq C2^{-\frac{41}{20}j}\|a\|_\infty^2\omega(y)|x-x'|^{\frac{1}{20}}.
\end{equation}

Now we denote $A'\triangle A=(A'\setminus A)\cup(A\setminus A')$. We claim that
\begin{equation}\label{e:sym}
\iint_{(A\triangle A')\setminus B(x,2^{\frac{3}{4}j}|x-x'|^\frac{1}{4})}\frac{dudv}{|x_2-v|}\leq C2^{\frac{19}{20}j}|x-x'|^{\frac{1}{20}}.
\end{equation}
Then by (\ref{e:sym}) we have
\[\bigg|\iint\limits_{(A\triangle A')\setminus B(x,2^{\frac{3}{4}j}|x-x'|^\frac{1}{4})}\psi(r)a(u,v)\omega(y+(r,0))\frac{dudv}{|x_2-v|}\bigg|\leq C2^{-\frac{41}{20}j}\|a\|_\infty^2\omega(y)|x-x'|^{\frac{1}{20}}.\]
Now we come back to prove ($\ref{e:sym}$) first in the case $x'_2=x_2$. By similar triangles, we obtain that
\[|(A'\triangle A)\cap\{(a_1,a_2):a_2=v\}|\leq\frac{2|x-x'||v-y_2|}{x_2-y_2}.\]
Then the integral in (\ref{e:sym}) has an estimate
\begin{equation}\label{e:AA'}
\iint\limits_{(A\triangle A')\setminus B(x,2^{\frac{3}{4}j}|x-x'|^\frac{1}{4})}\frac{dudv}{|x_2-v|}
\leq\int_{y_2}^{x_2-c_02^{\frac{3}{4}j}|x-x'|^{\frac{1}{4}}}\frac{2|x-x'|}{x_2-v}\frac{v-y_2}{x_2-y_2}dv,
\end{equation}
where $c_0$ is the minimum sine of the angle between vector $y+(2^{j-2},0)-x$ and $(1,0)$ and the angle between vector
$y+(2^j,0)-x$ and $(1,0)$. By a geometry estimate we have $c_0\geq C\frac{|x_2-y_2|}{2^j}$. So (\ref{e:AA'}) is controlled by
\[\frac{2|x-x'|}{|x_2-y_2|}\int_{c_o2^{\frac{3}{4}j}|x-x'|^{\frac{1}{4}}}^{x_2-y_2}
\frac{x_2-y_2-\upsilon}{\upsilon}d\upsilon
\leq C2^{\frac{19}{20}j}|x-x'|^{\frac{1}{20}}.\]

Now we consider the case where $x'_2>x_2$ or $x_2>x'_2$. By symmetry we only look at the case $x_2>x'_2>0$. We extend one of the sides of the shorter triangle $A'$ to make it have the same height as $A$.
Then we find a point $x''$ at the extend side such that $x_2=x''_2$.
Since
$$|x''-x'|<C2^{\frac{1}{5}j}|x-x'|^{\frac{4}{5}},$$ then
$$|x-x''|\leq C2^{\frac{1}{5}j}|x-x'|^{\frac{4}{5}}.$$
Replacing $A'$ by the larger triangle $A''$ with the vertex $\{x'',y+(2^{j-2},0),y+(2^j,0)\}$,
then $A\triangle A''$ contains $A\triangle A'$, and the ball
$B(x,2^{\frac{3}{4}j}|x-x'|^\frac{1}{4})\supset B(x,C2^{\frac{11}{16}j}|x-x'|^{\frac{5}{16}})$ for some constant $C$. Therefore, we have
\begin{equation}\label{e:AA''}
\iint_{(A\triangle A')\setminus B(x,2^{\frac{3}{4}j}|x-x'|^\frac{1}{4})}\frac{dudv}{|x_2-v|}\leq
\iint_{(A\triangle A'')\setminus B(x,c2^{\frac{11}{16}j}|x-x''|^\frac{5}{16})}\frac{dudv}{|x_2-v|}.
\end{equation}
Use the same method as the case $x_2=x'_2$, we can get that the right side of (\ref{e:AA''}) is bounded by
$C2^{\frac{19}{20}j}|x-x'|^{\frac{1}{20}}$.

The remaining part is $(A\cap A')\setminus B(x,2^{\frac{3}{4}j}|x-x'|^\frac{1}{4})$. By straightforward computation
 we have
\begin{equation}\label{e:rr'}
\begin{split}
|r-r'|&\leq |x_1-x_2|+\frac{|u-x'_1||x_2-x'_2||v-x'_2|}{|x_2-v\|x'_2-v|}\\
&\mspace{20mu}+\frac{|x_1-x'_1||y_2-x_2||v-x'_2|+|u-x'_1||y_2-x'_2||x_2-x'_2|}{|x_2-v||x'_2-v|}\\
&\leq C2^{\frac{19}{20}j}|x-x'|^\frac{1}{20}.
\end{split}
\end{equation}
Write
\begin{align*}
&\iint_{A\cap A'\setminus B(x,2^{\frac{3}{4}j}|x-x'|^{\frac{1}{4}})}\bigg[\frac{\psi(r)a(u,v)\omega(y+r\theta)}{|x_2-v|}
-\frac{\psi(r')a(u,v)\omega(y+r'\theta)}{|x'_2-v|}\bigg]dudv\\
&=:G_1+G_2+G_3,
\end{align*}
where
\begin{equation*}
\begin{split}
G_1&=\iint(\psi(r)-\psi(r'))a(u,v)\omega(y+r\theta)\frac{dudv}{|x_2-v|},\\
G_2&=\iint\psi(r')a(u,v)(\omega(y+r\theta)-\omega(y+r'\theta)\frac{dudv}{|x_2-v|},\\
\end{split}
\end{equation*}
and
\Bes
G_3=\iint\psi(r')a(u,v)\omega(y+r'\theta)\bigg(\frac{1}{|x_2-v|}-\frac{1}{|x'_2-v|}\bigg)dudv.
\Ees
For $G_1$, by ($\ref{e:rr'}$) and $|z|^\alpha=|y+r\theta|^\alpha\leq\omega(y)$, we have
\begin{align*}
|G_1|&\leq C2^{\frac{19}{20}j}|x-x'|^{\frac{1}{20}}\omega(y)\|a\|_\infty^2
\frac{1}{2^{4j}}\iint\frac{dudv}{|x_2-v|}\leq C2^{\frac{19}{20}j}|x-x'|^{\frac{1}{20}}\omega(y)\|a\|_\infty^22^{-3j}.
\end{align*}
For $G_2$, by ($\ref{e:rr'}$), $|y+r\theta|\geq C\cdot \max\{|y|,2^j\}$ and $|y+r'\theta|\geq C\cdot \max\{|y|,2^j\}$, we have
\begin{align*}
|G_2|&\leq C2^{\frac{19}{20}j}|x-x'|^\frac{1}{20}\|a\|_\infty^22^{-3j}\iiint_0^1|\nabla\omega(y+(sr+(1-s)r')\theta)|ds\frac{dudv}{|x_2-v|}\\
&\leq C2^{\frac{19}{20}j}|x-x'|^{\frac{1}{20}}2^{-3j}\|a\|_\infty^2\omega(y).
\end{align*}
For $G_3$, we also get
\begin{equation*}
|G_3|\leq C2^{\frac{19}{20}j}|x-x'|^{\frac{1}{20}}2^{-3j}\|a\|_\infty^2\omega(y).
\end{equation*}
Combining above estimates, we have
\begin{equation*}
|K_{j,j}(y,x)-K_{j,j}(y,x')|\leq C2^{\frac{19}{20}j}|x-x'|^{\frac{1}{20}}2^{-3j}\|a\|_\infty^2\omega(y)
\end{equation*}
for any $|x-x'|\leq 2^j\frac{1}{1000}$. Hence, we complete the proof of Lemma $\ref{1:f1}$.$\hfill{} \Box$

\vspace{3mm}
\hspace{-0.7cm}\emph{Proof of Lemma \ref{1:f2}}
\quad Fix  $2^{j-3}\leq |y|\leq2^{j+1}$. Let $K_j=K_j^1+K_j^2$,
where $$K_j^1(z-y)=K_j(z-y)\chi(z-y),$$ and $\chi$ is the characteristic function of the set
$\{x:\rho(x,y)\leq2^{-\beta s}\}$ with $\rho(x,y)=|\frac{y}{|y|}-\frac{x}{|x|}|$ and $\beta$ will be chosen later. Since $(T_j^*T_j)_\omega$ has kernel $K_{j,j}$, we write
$K_{j,j}=K_{j,j}^1+K_{j,j}^2,$ where
\[K_{j,j}^1(y,x)=\int K_j(z-x)K_j^1(z-y)m_{x,z}a\cdot m_{y,z}a\cdot\omega(z)dz\]
and
\[K_{j,j}^2(y,x)=\int K_j(z-x)K_j^2(z-y)m_{x,z}a\cdot m_{y,z}a\cdot\omega(z)dz.\]
Then we only need to prove Lemma \ref{1:f2} corresponding to $K_{j,j}^1$ and $K_{j,j}^2$. It is easy to check the
term corresponding to $K_{j,j}^1$. Indeed, we have
\begin{equation}
|K_{j,j}^1(y,x)|\leq C2^{-4j}\|a\|_\infty^2\int_{|z|\leq C|y|}\chi(z)|z+y|^{\alpha}dz\leq
c2^{-2j}\|a\|_\infty^2|y|^{\alpha}2^{-\beta s},
\end{equation}
where we use the following inequality ( see (0.3) in \cite[p.~425]{H2})
\begin{equation*}
\int_0^{a|y|}\big|s\frac{x}{|x|}+y\big|^\alpha ds\leq
\begin{cases}
C|y|^{\alpha+1}\big(1+|\frac{x}{|x|}+\frac{y}{|y|}|^{\alpha+1}\big) & \alpha\neq-1,\\ C\cdot\log^+|\frac{x}{|x|}+\frac{y}{|y|}| & \alpha=-1.
\end{cases}
\end{equation*}
Then the corresponding term
\begin{equation}
\Big|\int K_{j,j}^1(y,x)B_{j-s}(x)dx\Big|\leq C2^{-2j}\|a\|_\infty^2|y|^{\alpha}2^{-\beta s}
\int B_{j-s}(x)dx\leq C\|a\|_\infty\lambda\omega(y)2^{-\beta s}.
\end{equation}

Now we consider the remaining term corresponding to $K_{j,j}^2$. By the definition of
 $\chi$, $\rho(y-z,y)\geq2^{-\beta s}$. Since $\rho(y-z,y)=|\frac{y-z}{|y-z|}-\frac{y}{|y|}|\leq2\frac{|z|}{|y|}$, we have $|z|\geq C2^{-\beta s+j}$. Thus
\[\omega(z)\leq C2^{-\alpha\beta s}\omega(y).\]
Applying the same method as proving Lemma \ref{1:f1} we may obtain
\[\bigg|\int K_{j,j}^2(y,x)B_{j-s}(x)dx\bigg|\leq C\|a\|_\infty\lambda\omega(y)2^{\beta s(1-\alpha)-\frac{s}{5}}\]
as long as we choose $\beta=\frac{1}{10(1-\alpha)}$.
$\hfill{} \Box$

\vspace{2mm}

\hspace{-0.7cm}{\bf 3.2 Proof of Lemma \ref{e:cross}}

We write
\[\sum_{i\in\mathbb{Z}}\sum_{j\leq{i-3}}\langle T_jB_{j-s},T_iB_{i-s}\rangle_\omega =\sum_{i\in\mathbb{Z}}\big\langle\sum_{j\leq {i-3}}(T_i^*T_j)_\omega B_{j-s},B_{i-s}\big\rangle.\]
To prove Lemma \ref{e:cross}, it is easy to see that it suffices to prove the following lemma:
\begin{lemma}\label{e:cross1}
For a fixed $i$, there exist $C$ , $\varepsilon>0$ such that
\[\bigg|\sum_{j\leq{i-3}}(T_i^*T_j)_{\omega}B_{j-s}(y)\bigg|\leq C2^{-\varepsilon s}\lambda\|a\|_\infty\omega(y)\]
for any $s\geq10$.
\end{lemma}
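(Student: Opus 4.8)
The plan is to exploit the separation between scales $j$ and $i$ (with $j\le i-3$) together with the cancellation of the $b_n$'s making up $B_{j-s}$, exactly in the spirit of the estimate of $I$ in Section 3.1, but now with the roles of the two frequency parameters genuinely different. First I would write
\[
\sum_{j\le i-3}(T_i^*T_j)_\omega B_{j-s}(y)
=\sum_{j\le i-3}\sum_{Q\in\mathfrak Q_{j-s}}\int K_{i,j}(y,x)\,b_Q(x)\,dx,
\]
with $K_{i,j}$ the kernel in (\ref{1e:k_ij}), and use $\int b_Q=0$ to replace $K_{i,j}(y,x)$ by $K_{i,j}(y,x)-K_{i,j}(y,x_Q)$. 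So the crux is again a H\"older-type smoothness estimate for $K_{i,j}(y,\cdot)$ in the second variable, analogous to (\ref{e:hold smooth}) but tracking the dependence on both $i$ and $j$; call this (\ref{e:hold}). I expect a bound of the shape
\[
|K_{i,j}(y,x)-K_{i,j}(y,x')|\le C\,2^{-(d+1)i}\,2^{?\,j}\,|x-x'|^{?}\,\|a\|_\infty^2\,\omega(y),
\]
where the negative power of $2^i$ comes from the two factors $K_i$, $K_j$ and the size of the integration region $|z-y|\sim 2^i$ (note here $j\le i$, so the support constraint $|z-x|\sim 2^j$ forces $x$ to lie within $O(2^i)$ of $y$ and in particular $\omega(z)\sim\omega(y)$ along the lines already used in Lemma \ref{1:f1}), while the gradient bound $|\nabla K_j|\le C2^{-(d+1)j}$ supplies the smoothness in $x$.

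Granting (\ref{e:hold}), the summation is routine: splitting each $Q$-integral into the regions $|y-x|$ large or small relative to a power of $|x-x_Q|$ (as in the $J_1,J_2$ split), using $\|b_Q\|_1\le C\lambda\|a\|_\infty^{-1}|Q|$, $\sum_{Q\in\mathfrak Q_{j-s}}|Q|\lesssim 2^{(d)i}$ over the cubes that can interact with a fixed $y$ (their distance to $y$ is $\lesssim 2^{i}$), and $|x-x_Q|\lesssim 2^{j-s}$ because $Q\in\mathfrak Q_{j-s}$, one obtains for each fixed $j$ a bound $C\,2^{-\varepsilon s}\,2^{-\delta(i-j)}\lambda\|a\|_\infty\omega(y)$ with some $\delta>0$ coming from the extra negative power of $2^{i}$ against positive powers of $2^{j}$. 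Summing the geometric series in $j\le i-3$ then yields $C\,2^{-\varepsilon s}\lambda\|a\|_\infty\omega(y)$, which is the assertion. Feeding this into Lemma \ref{e:cross} via $|\langle\,\cdot\,,B_{i-s}\rangle|\le C2^{-\varepsilon s}\lambda\|a\|_\infty\|B_{i-s}\|_{1,\omega}$ and summing over $i$ against $\sum_i\|B_{i-s}\|_{1,\omega}\le\|b\|_{1,\omega}$ finishes that lemma, hence $III$, hence Lemma \ref{e:main}.

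The main obstacle is establishing (\ref{e:hold}). The off‑diagonal case is actually somewhat more delicate than the diagonal one handled in Lemma \ref{1:f1}: because $|z-x|\sim 2^j\ll 2^i\sim|z-y|$, one cannot symmetrize the two kernel factors, and one must carry out the polar-coordinate/triangle analysis (the sets $A$, $A'$, the change of variables producing the Jacobian $1/|x_2-v|$, and the symmetric-difference estimate $A\triangle A'$) with $2^i$ and $2^j$ kept distinct throughout, verifying at each stage that the geometry bounds (e.g. $|\sin\theta|\gtrsim |x_2-y_2|/2^{i}$, the constant $c_0$, the case distinctions $x_2=x_2'$ versus $x_2\ne x_2'$) go through with the correct powers. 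One must also handle the term $A_2$, the difference $m_{x,z}a-m_{x',z}a$, whose treatment needs the cancellation over the arc near $\pm(x-y)/|x-y|$; here, since the relevant radial variable runs over $[2^{i-2},2^i]$ while the cubes live at scale $2^{j-s}$, the smallness of $t_0$ must be measured against $2^i$. A secondary point, as in Lemma \ref{1:f2}, is the borderline weight behaviour when $z$ is allowed near the origin; but under $j\le i-3$ and $2^{j-3}\le|y|$-type restrictions inherited from the setup the weight comparison $\omega(z)\sim\omega(y)$ holds on the support, so no extra splitting of $K_i$ into a near-origin piece is needed, and the power weight enters only through the elementary bound $|y+r\theta|^\alpha\le\omega(y)$ together with $|\nabla\omega(z)|\lesssim |z|^{-1}\omega(z)$ on the region of integration.
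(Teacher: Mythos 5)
Your overall plan (smoothness of $K_{i,j}$ in the second slot plus cancellation of the $b_Q$'s) is the right one and matches the paper's, but the mechanism you propose for summing in $j\le i-3$ is a genuine gap.

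First, the size of the kernel rules out the Hölder bound you anticipate. On the support $|z-y|\sim 2^i$, $|z-x|\sim 2^j$, one has $|K_i|\lesssim 2^{-di}$, $|K_j|\lesssim 2^{-dj}$, and the $z$-integration set has measure $\sim 2^{dj}$, so $|K_{i,j}(y,x)|\lesssim 2^{-di}\omega(y)\|a\|_\infty^2$ -- the leading power in $i$ is $2^{-di}=2^{-2i}$, not $2^{-(d+1)i}$. Accordingly the paper's estimate (\ref{e:hold}) reads
\[
|K_{i,j}(y,x)-K_{i,j}(y,x')|\le C\,2^{-2i}\,2^{-j/20}\,|x-x'|^{1/20}\,\omega(y),
\]
and after plugging in $|x-x'|\lesssim 2^{j-s}$ the $j$-dependence cancels exactly: one gets $\lesssim 2^{-2i}2^{-s/20}\omega(y)\|b_n\|_1$ per cube, with \emph{no} factor $2^{-\delta(i-j)}$. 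Your plan to obtain, for each fixed $j$, a bound $C2^{-\varepsilon s}2^{-\delta(i-j)}\lambda\|a\|_\infty\omega(y)$ and then sum a geometric series therefore cannot be carried out: per fixed $j$ the best one gets is $C2^{-\varepsilon s}\lambda\|a\|_\infty\omega(y)$, and the infinite sum over $j\le i-3$ diverges. The paper closes this gap differently, and this is the key point you are missing: the Whitney cubes $Q_n$ at \emph{all} the scales $2^{j-s}$, $j\le i-3$, are pairwise disjoint, so
\[
\sum_{j\le i-3}\;\sum_{Q_n\in\mathfrak Q_{j-s},\,Q_n\subset B(y,2^{i+1})}\|b_n\|_1
\;\le\;\frac{C\lambda}{\|a\|_\infty}\sum|Q_n|\;\le\;\frac{C\lambda}{\|a\|_\infty}\,2^{2i},
\]
the total measure being controlled once by $|B(y,2^{i+1})|$. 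This $2^{2i}$ cancels the $2^{-2i}$ from the kernel and gives the assertion without any decay in $i-j$. Without this disjointness observation your argument does not converge.

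A second, smaller inaccuracy: you assert that under $j\le i-3$ the weight comparison $\omega(z)\lesssim\omega(y)$ holds on the support and no near-origin splitting of $K_i$ is needed. That is not uniformly true: when $|y|\sim 2^j$ and $j$ is close to $i-3$, the constraint $|z-y|\sim 2^i$ still allows $z$ to approach the origin, where $\omega(z)=|z|^\alpha$ blows up. The paper therefore treats the range $2^{j-3}\le|y|\le 2^{j+1}$ separately, splitting $K_i=K_i^1+K_i^2$ exactly as in Lemma \ref{1:f2}, and you do need some version of this step.
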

\begin{proof}
The proof is similar to that of Lemma \ref{e:diagonal}. So we only give the difference. Consider two cases of
$y$ as Lemma \ref{1:f1} and Lemma \ref{1:f2} respectively. We first consider the case $|y|< 2^{j-3}$ or $|y|>2^{j+1}$. We
proceed with the proof as we do in Lemma \ref{1:f1}: For the analogous term of $A_2$, we switch to the polar coordinates
$z=y+r\theta$
\[A_2=\int_{\mathbb{A}}\int_{2^{i-2}}^{2^i}\psi(r)(m_{x,y+r\theta}a-m_{x',y+r\theta})\omega(y+r\theta)drd\theta,\]
where
\[\psi(r)=K_j(y-x'+r\theta)K_i(r\theta)m_{y,y+r\theta}a\cdot r\]
and $\mathbb{A}$ is an arc in $\mathbb{S}^1$.

We claim that $\mathbb{A}$ is an arc of length of about $2^{-i+j}$.
Indeed, consider the support of $K_j$ and $K_i$, we have
$$2^{j-2}\leq |y-x'+r\theta|\leq 2^j\qquad \text{and}\qquad 2^{i-2}\leq r\leq 2^i.$$
Since $j\leq i-3$, we get $2^{i-3}\leq|y-x'|\leq 2^{i+1}$. Let $\Theta$ be the smallest cone with vertex at origin which
contains the disc of radius $2^j$ at $y-x'$. Then the angle of $\Theta$ is at most a constant multiple of $2^{-i+j}$. Since
$|y-x'|-2^j\leq r\leq|y-x'|+2^j$, so the integrate area on $r$ is $$[|y-x'|-2^j,|y-x'|+2^j]\cap[2^{i-2},2^i]$$ and
we set it as $[r_1,r_2]$. Using $j\leq i-3$, we have the estimates $\|\psi\|_\infty\leq C2^{-i}2^{-2j}\|a\|_\infty$ and
$\|\psi'\|_\infty\leq C2^{-i}2^{-3j}\|a\|_\infty$. After making a coordinate transform back, we get the integrate area $A$
and $A'$, where $A$ is the triangle with vertices $\{y+(r_1,0),y+(r_2,0),x\}$ and $A'$ is the triangle with vertices
$\{y+(r_1,0),y+(r_2,0),x'\}$. Then we have
\[\int_A\frac{dudv}{|x_2-v|}=\int_{y_2}^{x_2}(r_2-r_1)\frac{x_2-v}{x_2-y_2}\frac{dv}{x_2-v}=r_2-r_1\leq 2^{j+1}.\]
Since we have assumed that $|x-x'|\leq 2^j\frac{1}{1000}$, so it is not necessary to restrict $|x-y|>10|x-x'|^{\frac{1}{10}}
2^{\frac{9}{10}j}$ anymore. We just need $\frac{|x_2-y_2|}{|x-y|}>\frac{1}{10}t_0$ and $2^{i-3}\leq|y-x|\leq 2^{i+1}$. At last we obtain the H\"older smoothness estimate
\begin{equation}\label{e:hold}
|K_{i,j}(y,x)-K_{i,j}(y,x')|\leq C2^{-2i}2^{-j/20}|x-x'|^{\frac{1}{20}}\omega(y).
\end{equation}
Now we take a cube $Q_n$ with side length $2^{j-s}$ and use the H\"older smoothness estimate (\ref{e:hold}) to get
\begin{equation*}
\Big|\int K_{i,j}(y,x)b_n(x)dx\Big|=\Big|\int K_{i,j}(y,x)-K_{i,j}(y,x_{Q_n})b_n(x)dx\Big|
\leq C2^{-s/20}2^{-2i}\om(y)\|a\|_\infty\|b_n\|_1
\end{equation*}
where $x_{Q_n}$ is the center of $Q_n$. For a fixed $y$ the kernel $K_{i,j}(y,x)$ is supported in $B(y,2^{i+1})$. Hence we have
\[\Big|\int K_{i,j}(y,x)B_{j-s}(x)dx\Big|\leq C\|a\|_\infty^22^{-\frac{s}{20}}2^{-2i}\om(y)\sum_{Q_n\in \mathfrak{Q}_{j-s}\atop Q_n\subset B(y,2^{i+1})}\|b_n\|_1.\]
 Then we sum over $j\leq i-3$. Note that the cubes $Q_n$ are disjoint each other, therefore we get
\begin{equation}\label{e:last}
\begin{split}
&\sum_{j\leq i-3}\sum_{Q_n\in \mathfrak{Q}_{j-s}\atop Q_n\subset B(y,2^{i+1})}\|b_n\|_1
\leq\sum_{j\leq i-3}\sum_{Q_n\in \mathfrak{Q}_{j-s}\atop Q_n\subset B(y,2^{i+1})}\frac{C\lambda|Q_n|}{\|a\|_\infty}\leq
\frac{C2^{2i}\lambda}{\|a\|_\infty},
\end{split}
\end{equation}
where the last inequality comes from all the cubes that appear in (\ref{e:last}) are contained in a disc of radius $2^{i+1}$. Hence we have
\[\Big|\sum_{j\leq{i-3}}(T_i^*T_j)_{\omega}B_{j-s}(y)\Big|=\sum_{j\leq{i-3}}\Big|\int K_{i,j}(y,x)B_{j-s}(x)dx\Big|\leq C2^{-\varepsilon s}\lambda\|a\|_\infty\omega(y).\]

For the case $2^{j-3}<|y|<2^{j+1}$. As in Lemma \ref{1:f2}, we write $K_i=K_i^1+K_i^2$ and denote
\[K_{i,j}^1=\int K_j(z-x)K_i^1(z-y)m_{x,z}a\cdot\omega(z)dz\]
and
\[K_{i,j}^2=\int K_j(z-x)K_i^2(z-y)m_{x,z}a\cdot\omega(z)dz.\]
A similar argument, which has been used  to deal with the term corresponding to $K^1_{j,j}$ in proving Lemma \ref{1:f2},
can be applied to estimate $K^1_{i,j}$. Combining the method we deal with $K^2_{j,j}$ in Lemma \ref{1:f2} and the method
we handle $K_{i,j}$ in the case $|y|>2^{j+1}$ or $|y|<2^{j-3}$, we can get the proof of the term related to
$K^2_{i,j}$. Therefore we complete the proof of Lemma \ref{e:cross1}.
\end{proof}

\vspace{4mm}
\section{Proof of Theorem $\ref{t:1}$}
In \cite{H1}, Hofmann gave a weighted $L^p$ boundedness for general singular
integral operators. We will show that the commutator $T_a$ discussed in this paper is an example of that general operator.

Before stating the theorem in \cite{H1}, let us give some notations. For an open set $\Omega$ in $\Bbb R^d$, we denote by $C^{\infty}_c(\Omega)$
the set of functions with continuous derivatives of any order and compact support in $\Omega$.
Let $\psi\in C^{\infty}_c(\{|x|<1\})$ be radial, non-trivial, have mean value zero, and be normalized so that
$\int_0^\infty|\hat{\psi}(s)|^2\frac{ds}{s}=1$, then we define $Q_sf=\psi_s\ast f$, where $\psi_s(x)=s^{-d}\psi(\frac{x}{s})$.

Let $\mathfrak{D}$ denote the space of smooth function with compact support in $\Bbb R^d$ and $\mathfrak{D'}$ be its dual space. We assume that $T$ maps $\mathfrak{D}$ to $\mathfrak{D}'$ and $T$ is associated a kernel $K(x,y)$ in the
sense that for $f,g\in C^\infty_{c}(\Bbb R^d)$ with disjoint support
\[\langle Tf,g \rangle=\iint K(x,y)f(y)g(x)dydx.\]

We will introduce some conditions similar to the conditions of $T1$ Theorem. We first suppose the kernel $K$ satisfies the size condition:
\begin{equation}\label{e:size}
|K(x,y)|\leq C_1|x-y|^{-d}.
\end{equation}
Let $\varphi\in C^\infty_c(\frac{1}{2},2)$. Then for $v\in(0,\infty)$ we set
\[T_vf(x)=\int K(x,y)\varphi\Big(\frac{|x-y|}{v}\Big)f(y)dy.\]

We introduce the \emph{weak smoothness condition} (WS):
\begin{equation}\label{e:wsmoth}
\|Q_sT_v\|_{op}+\|Q_sT^*_v\|_{op}\leq C_2\| \psi\|_1(\|\varphi\|_\infty+\|\varphi'\|_\infty)(\frac{s}{v})^{\varepsilon_0}.
\end{equation}
for some $0<\varepsilon_0\leq 1$ and $s<v$, where $T^*$ is the adjoint operator of $T$ and $\|\cdot\|_{op}$ denotes the norm of operator mapping $L^2$ to $L^2$.

As usual, we require the \emph{weak boundedness property} (WBP):
\begin{equation}\label{e:WBP}
\langle Th,\tilde{h}\rangle\leq C_3R^d(\|h\|_\infty+R\|\nabla h\|_\infty)
(\|\tilde{h}\|_\infty+R\|\nabla \tilde{h}\|_\infty).
\end{equation}
for all $h,\tilde{h}\in C^\infty_c(\R^d)$ with support in any ball of radius $R$.

To define $T1$, we impose the \emph{qualitative technical condition} (QT):
\begin{equation}\label{e:techni}
\begin{split}
\int_{|x-u|>2s}&\Big|\int\psi_s(x-z)K(z,u)dz\Big|du<\infty,\\
\int_{|x-u|>2s}&\Big|\int\psi_s(x-z)K^*(z,u)dz\Big|du<\infty,
\end{split}
\end{equation}
where $K^*(z,u)$ is the kernel of $T^*$. Let $\tilde{\psi}\in C^\infty_{c}(B(x_0,s))$ and $\int\tilde{\psi}(x)dx=0$.
 We write $1=h+(1-h)$, where $h\in C^\infty_{c}(B(x_0,4s))$ and $h\equiv1$ for $|x-x_0|\leq2s$. Then we define
\[\langle \tilde{\psi},T1\rangle=\langle \tilde{\psi},Th\rangle+\langle T^*\tilde{\psi},1-h\rangle,\]
where the second term is well defined by (\ref{e:techni}).

We consider truncations of $T$.
Let $\Phi\in C^\infty_c(-1,1)$ and $\Phi\equiv1$ on $[-\frac{1}{2},\frac{1}{2}]$. For $t<r$ and $f\in L^p$ with $1< p<\infty$, $T_{(t,r)}$ is defined as follows:
\[T_{(t,r)}f(x)=\int K(x,y)\Big[\Phi\Big(\frac{|x-y|}{r}\Big)-\Phi\Big(\frac{|x-y|}{t}\Big)\Big]f(y)dy.\]
$T_{(0,r)}$ can be defined formally:
\[T_{(0,r)}f(x)=\int K(x,y)\Phi\Big(\frac{|x-y|}{r}\Big)f(y)dy.\]

Now we need two conditions to replace the usual condition $T1$, $T^*1\in BMO$. One is the \emph{quasi-Carleson measure condition} (QCM): For any ball $B$ of radius $10\sqrt{d}t$, $1<q<\infty$, $t>0$, we have
\begin{equation}\label{e:qCarl}
\Big\|\Big(\int^t_0|Q_sT_{(0,t)}1|^2\frac{ds}{s}\Big)^\frac{1}{2}\Big\|_{L^q(B,\frac{dx}{t^d})}\leq C_4,
\end{equation}
where $\frac{dx}{t^d}$ denotes normalized Lebesgue measure.

The other is the \emph{local paraproduct type condition} (LP): For all $r>0$ and $1<q<\infty$, for all $f\in C_c^\infty(\R^d)$ with support in any ball of radius $10\sqrt{d}r$, we have
\begin{equation}\label{e:lpara}
\|\pi_rf\|_{L^q(B,\frac{dx}{r^d})}<C_5 \|f\|_\infty,
\end{equation}
where
\[\pi_rf=\int^r_0\int^t_0Q_s(Q_sT_{(t,r)}1Q^2_tf)\frac{ds}{s}\frac{dt}{t}.\]

\begin{thma}{\rm (See \cite[ Theorem 2.14]{H1})}
Suppose that $T,T^*$ and its kernel $K$ satisfies $(\ref{e:size})\sim(\ref{e:lpara})$. Then for all $\omega\in A_p$ with
$1<p<\infty$, we have
\begin{equation}
\|Tf\|_{p,\omega}\leq C\Big(\sum\limits_{1\leq i\leq5}C_i\Big)\|f\|_{p,\omega}
\end{equation}
\end{thma}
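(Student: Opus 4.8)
The plan is to run the paraproduct-plus-almost-orthogonality scheme underlying $T1$-type theorems, with the classical kernel-regularity hypothesis replaced by the weak smoothness condition (\ref{e:wsmoth}) and the classical requirement $T1,T^{*}1\in BMO$ replaced by the local substitutes (\ref{e:qCarl}) and (\ref{e:lpara}). First I would reduce to a uniform bound for the truncations: condition (\ref{e:techni}) makes $T1$ and $T^{*}1$ well defined as functionals on the relevant bump classes, and it suffices to prove $\|T_{(0,r)}f\|_{p,\omega}\le C(\sum_iC_i)\|f\|_{p,\omega}$ with a constant independent of $r$, after which the size condition (\ref{e:size}) allows passage to the limit $r\to\infty$ on a dense subclass by dominated convergence. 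The basic device is the Calder\'on reproducing identity $\int_0^\infty Q_s^2\,\frac{ds}{s}=\mathrm{Id}$ on $L^2$, used through the square function $S_\psi g=\big(\int_0^\infty|Q_sg|^2\frac{ds}{s}\big)^{1/2}$, which for $\omega\in A_p$ satisfies $\|S_\psi g\|_{p,\omega}\approx\|g\|_{p,\omega}$; the model object is $Q_sTQ_t$, and the argument is organized by the two regimes $s\le t$ and $s>t$, interchanged by duality with $T^{*}$.

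Next I would subtract paraproducts. Using the operator $\pi_r$ of (\ref{e:lpara}), built from $Q_sT_{(t,r)}1$, together with the analogous operator $\widetilde\pi_r$ built from $T^{*}1$, set $\widetilde T_r:=T_{(0,r)}-\pi_r-\widetilde\pi_r^{*}$, arranged so that $\widetilde T_r1=\widetilde T_r^{*}1=0$ in the local sense furnished by (\ref{e:techni}). The two paraproduct pieces are disposed of directly: (\ref{e:qCarl}) says that $|Q_sT_{(0,t)}1|^2\,\frac{ds}{s}\,dx$ is a Carleson measure with constant at most a multiple of $C_4$, so the paraproduct built from $T1$ is bounded on $L^p(\omega)$ by the Carleson embedding theorem for $A_\infty$ measures combined with the weighted Littlewood--Paley inequality, while the genuinely local portion of the paraproduct is controlled by the hypothesis (\ref{e:lpara}) itself; the paraproduct built from $T^{*}1$ is symmetric. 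There remains the bound for the reduced operator $\widetilde T_r$.

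The heart of the matter is the almost-orthogonality estimate $\|Q_s\widetilde T_rQ_t\|_{L^2\to L^2}\le C\big(\sum_iC_i\big)\big(\min(s,t)/\max(s,t)\big)^{\delta}$ for some $\delta\in(0,\varepsilon_0)$, uniform in $r$. For $s\le t$ I would split $\widetilde T_r$ at scale $t$: the portion of the kernel at distance at least a multiple of $t$ from the diagonal is estimated by the size bound (\ref{e:size}) paired against the concentration/cancellation of $Q_s$ and $Q_t$ at their own scales, producing a power gain in $s/t$; for the portion at distance at most a multiple of $t$, one writes the matched truncation as a superposition of the dilates $T_v$ with $v$ of size $t$ through the cutoff $\Phi$ and invokes (\ref{e:wsmoth}), which yields precisely the factor $(s/t)^{\varepsilon_0}$, the remaining very-near-diagonal piece being absorbed by (\ref{e:WBP}); the cancellation $\widetilde T_r1=0$ is what permits these local pieces to be reassembled by a Schur/molecular estimate, and $\widetilde T_r^{*}1=0$ plays the symmetric role when $s>t$. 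Summing the resulting geometric series in $\log(s/t)$ (Cotlar--Stein) gives $L^2$-boundedness of $\widetilde T_r$, uniformly in $r$. For the weighted statement I would combine this $L^2$ operator decay with the pointwise domination $|Q_s\widetilde T_rQ_tg|(x)\le C\,\eta(s,t)\,Mg(x)$, available because $Q_s\widetilde T_rQ_t$ has an integrable kernel with tail decay at scale $\max(s,t)$ (again by (\ref{e:size})), and feed it, via $\|\widetilde T_rf\|_{p,\omega}\le C\|S_\psi(\widetilde T_rf)\|_{p,\omega}$ and $Q_s\widetilde T_rf=\int_0^\infty Q_s\widetilde T_rQ_t(Q_tf)\frac{dt}{t}$, into the weighted vector-valued maximal inequality (Fefferman--Stein for $A_p$) together with Schur's test applied to the kernel $\big(\min(s,t)/\max(s,t)\big)^{\delta}$ in the scale variables.

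The main obstacle is precisely this third step: extracting an almost-orthogonality bound with a genuine power gain from the merely $L^2\to L^2$ smoothness (\ref{e:wsmoth}) rather than from pointwise H\"older continuity of the kernel, and then producing the companion pointwise/kernel decay that lets the Fefferman--Stein maximal inequality upgrade the $L^2$ conclusion to all of $L^p(\omega)$, $\omega\in A_p$. In the classical $T1$ theorem, genuine kernel regularity renders both of these routine; here the delicate points are the repeated splitting of the truncations at the intermediate scale and the bookkeeping ensuring that the final constant is linear in $C_1,\dots,C_5$ and otherwise depends only on $\|\psi\|_1$, $\|\varphi\|_\infty$, $\|\varphi'\|_\infty$, $\varepsilon_0$ and the $A_p$ characteristic of $\omega$.
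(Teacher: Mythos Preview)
The paper does not prove Theorem A. It is quoted as a black-box result from Hofmann \cite[Theorem~2.14]{H1}, and the remainder of Section~4 is devoted solely to verifying that the specific commutator $T_a$ satisfies the hypotheses (\ref{e:size})--(\ref{e:lpara}) with constants $C_i\le C\|a\|_\infty$, so that Theorem~A may be invoked to obtain Theorem~\ref{t:1}. There is therefore nothing in the paper to compare your proposal against.

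Your sketch is not a proof of anything in this paper; it is an attempt to reconstruct Hofmann's original argument. As an outline of that argument it is broadly reasonable---reduction to truncations, paraproduct subtraction driven by (\ref{e:qCarl}) and (\ref{e:lpara}), almost-orthogonality for the reduced operator via (\ref{e:wsmoth}) and (\ref{e:WBP}), and a weighted upgrade through Littlewood--Paley and vector-valued maximal inequalities---and you correctly flag the genuine difficulty, namely producing both the $L^2$ almost-orthogonality gain and the companion pointwise control needed for the $A_p$ extension when only the operator-norm smoothness (\ref{e:wsmoth}) is available rather than pointwise H\"older regularity of the kernel. One point to be cautious about: the pointwise domination $|Q_s\widetilde T_rQ_tg|\le C\eta(s,t)Mg$ you propose does not follow from (\ref{e:size}) and (\ref{e:wsmoth}) alone in any obvious way, since (\ref{e:wsmoth}) is an $L^2$ statement; Hofmann's actual route to the weighted bound is somewhat more indirect. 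In any case, all of this lies outside the scope of the present paper, which simply cites the result.
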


To prove Theorem \ref{t:1}, we only need to verify that the commutators $T_a$, $T^*_a$ and the kernel $L(x,y)=K(x-y)m_{x,y}(a)$ satisfies the conditions $(\ref{e:size})\sim(\ref{e:lpara})$ with $C_i$ bounded by $C\|a\|_\infty$, $1\leq i\leq5$. It is trivial to see that $|L(x,y)|\leq C\frac{\|a\|_\infty}{|x-y|^d}$. By the $L^2$ boundedness of $T_a$ (see \cite{CJ}), $\|T_a\|_{2\rightarrow2}\leq C\|a\|_{\infty}$. Then we have
\[|\langle T_ah,\widetilde{h}\rangle|\leq \|T_a\|_{2\rightarrow2}\|h\|_2\|\widetilde{h}\|_2\leq C\|a\|_\infty\|h\|_\infty \|\widetilde{h}\|_\infty R^d,\]
where $h,\widetilde{h}\in C_c^{\infty}(\R^d)$ and $h,\widetilde{h}$ support in $B(x_0,R)$. By Theorem A, the proof of the Theorem $\ref{t:1}$ follows immediately from the next four clams:

\hspace{-0.7cm}\textbf{{Clam 1}}: The operator $T_a$ satisfies the weak smooth condition $(\ref{e:wsmoth})$, which means that
$$\|Q_sT_{a,v}\|_{op}+\|Q_sT^*_{a,v}\|_{op}\leq C\|a\|_\infty\| \psi\|_1(\|\varphi\|_\infty+\|\varphi'\|_\infty)(\frac{s}{v})^{\varepsilon_0},$$
where $$T_{a,v}f(x)=\int L(x,y)\varphi\Big(\frac{|x-y|}{v}\Big)f(y)dy.$$

The proof of Clam 1 is similar to the proof of Lemma 4.3 in \cite{H1}. We just give the difference by the following lemma.
\begin{lemma}\label{aux}Let $S_1$ denote the convolution operator with the kernel $H(x)=K(x)\varphi(|x|)$, where $K$ is a Calder\'on-Zygmund convolution kernel. Then $\|Q_sS_1\|_{op}\leq Cs^{\varepsilon_0}$
for $s<1$ and $0<\varepsilon_0 <1$.
\end{lemma}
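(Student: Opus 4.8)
The plan is to estimate the operator norm of $Q_sS_1$ on the Fourier side. Since $Q_s$ and $S_1$ are both convolution operators, $Q_sS_1$ is convolution with a function whose Fourier transform is $\widehat{\psi_s}(\xi)\widehat{H}(\xi) = \widehat{\psi}(s\xi)\widehat{H}(\xi)$, so by Plancherel it suffices to show
\[
\sup_{\xi}\big|\widehat{\psi}(s\xi)\widehat{H}(\xi)\big|\leq Cs^{\varepsilon_0}.
\]
First I would record the two basic facts about the factors. The kernel $H(x)=K(x)\varphi(|x|)$ is supported in the annulus $\tfrac12\le|x|\le 2$ and, because $K$ is a Calder\'on-Zygmund convolution kernel satisfying \eqref{e:K_1}--\eqref{e:K_3}, $H$ is a bounded compactly supported function whose gradient obeys $|\nabla H(x)|\le C$; a standard integration-by-parts in the defining integral of $\widehat H$ then gives the uniform bound $|\widehat{H}(\xi)|\le C(1+|\xi|)^{-1}$ for, say, $|\xi|\ge1$, and trivially $|\widehat H(\xi)|\le\|H\|_1\le C$ for all $\xi$. (One does not need the cancellation \eqref{e:K_2} here, only the size and smoothness of $K$ on the annulus.) For the other factor, since $\psi\in C_c^\infty$ has mean zero, $|\widehat\psi(\eta)|\le C\min\{|\eta|,1\}$, and more generally $|\widehat\psi(\eta)|\le C\min\{|\eta|^{\varepsilon_0},1\}$ for any $0<\varepsilon_0<1$.

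Next I would split into the regimes $|\xi|\le 1/s$ and $|\xi|>1/s$ and combine the two bounds. When $|\xi|\le 1/s$ we use $|\widehat\psi(s\xi)|\le C|s\xi|^{\varepsilon_0}=Cs^{\varepsilon_0}|\xi|^{\varepsilon_0}$ together with $|\widehat H(\xi)|\le C(1+|\xi|)^{-1}\le C(1+|\xi|)^{-\varepsilon_0}$, so the product is at most $Cs^{\varepsilon_0}\big(|\xi|/(1+|\xi|)\big)^{\varepsilon_0}\le Cs^{\varepsilon_0}$. When $|\xi|>1/s$ we instead use $|\widehat\psi(s\xi)|\le C$ (the trivial bound) and the decay $|\widehat H(\xi)|\le C|\xi|^{-1}< C|\xi|^{-\varepsilon_0}\le Cs^{\varepsilon_0}$, since $s<1$ forces $\varepsilon_0<1$ and $|\xi|^{-\varepsilon_0}\le s^{\varepsilon_0}$ exactly when $|\xi|\ge 1/s$. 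In both regimes the product is bounded by $Cs^{\varepsilon_0}$, uniformly in $\xi$, which by Plancherel gives $\|Q_sS_1\|_{op}=\sup_\xi|\widehat\psi(s\xi)\widehat H(\xi)|\le Cs^{\varepsilon_0}$ as claimed.

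I expect the only point requiring care — hence the "main obstacle," though it is mild — is justifying the decay estimate $|\widehat H(\xi)|\lesssim(1+|\xi|)^{-1}$ from the hypotheses on $K$ alone: one must check that on the annulus $\tfrac12\le|x|\le2$ the function $H=K\varphi(|\cdot|)$ is genuinely Lipschitz, which follows from \eqref{e:K_1} and \eqref{e:K_3} together with the smoothness of $\varphi$ (there is no singularity of $K$ in this region), and then integrate by parts once. Everything else is the routine two-regime Fourier splitting above. This is exactly the ingredient that replaces the corresponding step in the proof of Lemma 4.3 of \cite{H1}, after which Clam 1 follows as there.
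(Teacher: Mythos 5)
Your proof is correct, and it takes a genuinely different route to the central estimate on $\widehat H$. The paper passes to polar coordinates, applies Van der Corput's first-derivative test to the radial integral to get $\bigl|\int e^{-2\pi i r\theta\cdot\xi}K(r\theta)\varphi(r)r^{d-1}dr\bigr|\lesssim |\theta\cdot\xi|^{-1}$, interpolates with the trivial bound, and then integrates $|\theta\cdot\xi|^{-\varepsilon_0}$ over $S^{d-1}$; the restriction $\varepsilon_0<1$ is exactly what makes the angular singularity at $\theta\cdot\xi=0$ integrable. You instead observe that on the annulus $\tfrac12\le|x|\le2$ the hypotheses \eqref{e:K_1} and \eqref{e:K_3} (no cancellation needed) make $H=K\varphi(|\cdot|)$ a compactly supported Lipschitz function, so a single integration by parts yields the stronger and cleaner bound $|\widehat H(\xi)|\lesssim (1+|\xi|)^{-1}$; combined with $|\widehat\psi(\eta)|\lesssim\min(|\eta|,1)$ and the same two-regime split at $|\xi|=1/s$, this even gives $\|Q_sS_1\|_{op}\lesssim s$, i.e.\ the $\varepsilon_0=1$ endpoint, which of course implies the stated conclusion for every $\varepsilon_0\in(0,1)$. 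The trade-off is robustness: the paper's Van der Corput/spherical-integration scheme is inherited from Hofmann's treatment of rough homogeneous kernels $\Omega(x/|x|)|x|^{-d}$, where one cannot differentiate $K$ and an integration-by-parts argument like yours is unavailable; since here $K$ satisfies the full smooth Calder\'on--Zygmund gradient condition \eqref{e:K_3}, your more elementary argument is the natural one and is both shorter and sharper.
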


\begin{proof}
In fact, by Plancherel theorem we only need to check $|\hat{\psi}_s(\xi)\hat{H}(\xi)|
\leq Cs^{\varepsilon_0}$, for $0<\varepsilon_0<1$. We firstly give an estimate of $\hat{H}(\xi)$. Write
\[\hat{H}(\xi)=\int_{S^{d-1}}\int e^{-2\pi ir\theta\cdot\xi}K(r\theta)\varphi(r)r^{d-1}drd\theta.\]
By Van der Corput's lemma, we have
\[\Big |\int e^{-2\pi ir\theta\cdot\xi}K(r\theta)\varphi(r)r^{d-1}dr\Big|\leq\frac{C}{2\pi|\theta\cdot\xi|}.\]
On the other hand, by
$$\bigg|\int e^{-2\pi ir\theta\cdot\xi}K(r\theta)\varphi(r)r^{d-1}dr\bigg|\leq C,$$
thus it is also dominated by
$C|\theta\cdot\xi|^{-\varepsilon_0}$ for any $0<\varepsilon_0<1$. So we have $|\hat{H}(\xi)|\leq C|\xi|^{-\varepsilon_0}$. For $|s\xi|>1$, $|\hat{\psi_s}(\xi)\hat{H}(\xi)|\leq\|\psi\|_{L^1}|\xi|^{-\varepsilon_0}\leq
Cs^{\varepsilon_0}$. For $|s\xi|\leq 1$, since $$|\hat{\psi_s}(\xi)|=|\hat{\psi}(s\xi)|=\bigg|\int(e^{-2\pi is\xi
x}-1)\psi(x)dx\bigg|\leq C|s\xi|^{\varepsilon_0},$$ we have $|\hat{\psi}_s(\xi)\hat{H}(\xi)|\leq
Cs^{\varepsilon_0}$ for $0<\varepsilon_0<1$. Hence we complete the proof.
\end{proof}

\hspace{-0.7cm}\textbf{{Clam 2}}: The operator $T_a$ satisfies the technical condition $(\ref{e:techni})$.

Since $L^*(x,y)$ has the same form as $L(x,y)$,
it is sufficient to prove ($\ref{e:techni}$) for $L(x,y)$. We need to use the following  estimate (see Lemma 3 in \cite[p.~68]{CJ}):
\begin{equation}\label{e:CJ}
\iiint_{x,y,y'\in B(x_0,R)\atop |y-y'|<r}\big|(m_{x,y}a)^k-(m_{x,y'}a)^k\big|^2
dydy'dx\leq Ck^2\big(\frac{r}{R}\big)^{\frac{2}{3}}r^dR^{2d}\|a\|_\infty^{2k},
\end{equation}
where $0<r<R$, $k$ is a positive integer. Since $\psi$ has mean value zero, we have
\[\int_{|x-u|>2s}\Big|\int\psi_s(x-z)K(z-u)m_{z,u}adz\Big|du\leq H_1+H_2,\]
where
\[H_1=\int_{|x-u|>2s}\Big|\int\psi_s(x-z)(K(z-u)-K(x-u))m_{z,u}adz\Big|du,\]
and
\[H_2=\int_{|x-u|>2s}\Big|\int\psi_s(x-z)K(x-u)(m_{z,u}a-m_{x,u}a)dz|du.\]
For $H_1$, we have
\[H_1\leq\int_{|x-u|>2s}\int_{|x-z|<s}\frac{|z-x|}{s^d\cdot|x-u|^{d+1}}dzdu\leq C.\]
For $H_2$, we have
\begin{align*}H_2&=\sum_{j=0}^{+\infty}\int_{2^{j+1}s>|x-u|>2^js}\Big|
\int\psi_s(x-z)K(x-u)(m_{z,u}a-m_{x,u}a)dz\Big|du=:\sum_{j=0}^{+\infty}m_j(x).
\end{align*}
Now we consider
\begin{align*}
&\mspace{20mu}\frac{1}{|B(x_0,2^js)|}\int_{B(x_0,2^js)}m_j(x)dx\\
&\leq\frac{1}{(2^js)^dv_n}\int_{B(x_0,2^js)}\int_{2^{j+1}s>|x-u|>2^js}
\int_{|x-z|<s}\frac{C}{s^d}\frac{1}{(2^js)^d}|m_{z,u}a-m_{x,u}a|dzdudx\\
&\leq\frac{1}{(2^js)^dv_n}\frac{C}{s^d(2^js)^d}\iiint_{x,z,u\in B(x_0,2\cdot2^js)\atop |x-z|<s}|m_{z,u}a-m_{x,u}a|dzdudx\\
&\leq C2^{-\frac{1}{3}j},
\end{align*}
where the third inequality follows from H\"older's inequality and ($\ref{e:CJ}$). Note that the constant $C$ is independent of $s$, so $m_j(x)\leq Mm_j(x)\leq C\cdot2^{-\frac{1}{3}j}$. We hence get
\[H_2\leq\sum_{j=0}^{+\infty}C\cdot2^{-\frac{1}{3}j}<C.\]

\hspace{-0.7cm}\textbf{{Clam 3}}: The operator $T_a$ satisfies the quasi-Carleson measure condition $(\ref{e:qCarl})$.

By dilation invariance we may take $t=1$. Suppose $B$ is a ball of radius $10\sqrt{d}$ with center $x_0$. We have
\[\langle T_{(0,1)}f,g\rangle=\iint K(x-y)m_{x,y}(a)\Phi(|x-y|)f(y)g(x)dydx.\]
Here and in the sequel we still use the notation $T_{(0,1)}$. Choose $\eta\in C_c^\infty(\Bbb R^d)$, such that $\eta(x)=1$ on $2B(x_0,10\sqrt{d})$ and $\eta(x)=0$ on $(4B(x_0,10\sqrt{d}))^c$. By the support of $Q_sT_{(0,1)}1$, we have
\begin{equation}\label{e:3_1}
\Big(\int_B\Big(\int_0^1\big|Q_sT_{(0,1)}1(x)\big|^2\frac{ds}{s}\Big)^\frac{q}{2}dx\Big)^\frac{1}{q}=
\Big(\int_B\Big(\int_0^1\big|Q_sT_{(0,1)}\eta(x)\big|^2\frac{ds}{s}\Big)^\frac{q}{2}dx\Big)^\frac{1}{q}.
\end{equation}
By Littlewood-Paley theory (See \cite{L}), ($\ref{e:3_1}$) is majorized by $(\int_B(T_{(0,1)}\eta(x))^qdx)^{\frac{1}{q}}$. If the
operator with kernel $K(\cdot)\Phi(|\cdot|)$ is bounded on $L^2$, then by Christ's result in \cite{CJ},
$\|T_{(0,1)}\|_{q\rightarrow q}\leq C\|a\|_\infty$ for all $1<q<+\infty$. Hence ($\ref{e:3_1}$) is bounded.

Indeed, it is easy to check that $K(\cdot)\Phi(|\cdot|)$ is still a Calder\'on-Zygmund convolution kernel. Note that
$$\widehat{K\Phi}(\xi)=\hat{K}\ast\hat{\Phi}(\xi)
=\int_{\mathbb{R}^n}\hat{K}(y)\hat{\Phi}(|\xi-y|)dy$$
is bounded since  $K$ is a Calder\'on-Zygmund convolution kernel and $\hat{\Phi}$ is a Schwartz function. So, the operator with the kernel  $K(\cdot)\Phi(|\cdot|)$, initially defined on Schwartz class, has a bounded extension to an operator mapping $L^2(\mathbb{R}^n)$ to itself.

\hspace{-0.7cm}\textbf{{Clam 4}}: The operator $T_a$ satisfies the local paraproduct condition $(\ref{e:lpara})$.

By dilation invariance, we may take $r=1$. Let $f,g\in C_c^\infty(\Bbb R^d)$ with support in
$B(x_0,10\sqrt{d})$. Fix $1<q<\infty$, by duality we only need to prove
\begin{equation}\label{e:5_1}
\Big|\int_\varepsilon^1\int_\delta^t\langle Q_sT_{(t,1)}1Q_t^2f,Q_sg\rangle\frac{ds}{s}\frac{dt}{t}\Big|\leq C\|a\|_\infty\|f\|_\infty\|g\|_{q'}.
\end{equation}
We write
\begin{equation*}
\langle Q_sT_{(t,1)}1Q_t^2f,Q_sg\rangle=\langle Q_tf,Q_t(Q_sT_{(t,1)}1Q_sg)\rangle.
\end{equation*}

Consider $Q_t(Q_sT_{(t,1)}1)(x)$, we can replace $1$ by $\eta$, where $\eta\in C_c^\infty(\Bbb R^d)$ with $\eta\equiv1$ on
 $B(x,3t)$ and $\eta\equiv0$ on $(B(x,4t))^c$. By H\"older's inequality,
\Be
\begin{split}\label{e:QsTt}
&\mspace{20mu}|Q_t(Q_sT_{(t,1)}1Q_sg)(x)|\\
&\leq C\Big(\fr{1}{t^n}\int_{|x-z|\leq t}|Q_sT_{(t,1)}\eta(z)|^{q'_1}dz\Big)^{\fr{1}{q'_1}}
\Big(\fr{1}{t^n}\int_{|x-z|\leq t}|Q_sg(z)|^{q_1}dz\Big)^{\fr{1}{q_1}}\\
&\leq C(M(|Q_sg|^{q_1})(x))^{\fr{1}{q_1}}\Big(\fr{1}{t^n}\int_{|x-z|\leq t}|Q_sT_{(t,1)}\eta(z)|^{q'_1}dz\Big)^{\fr{1}{q'_1}},
\end{split}
\Ee
where we choose $1<q_1<2$.
Since $\|Q_sT_{a,v}f\|_\infty\leq C\|T_{a,v}f\|_\infty\leq C\|a\|_\infty\|f\|_\infty$, by Clam 1 and using interpolation we have
\begin{equation}\label{e:5_2}
\|Q_sT_{a,v}\|_{p\rta p}\leq C\|a\|_\infty\big(\frac{s}{v}\big)^{\varepsilon(p)}
\end{equation}
for all $2\leq p<+\infty$. Make a smooth partition of unity and write
\[\Phi\big(\frac{\rho}{t}\big)-\Phi(\rho)=\sum_{j:t\leq2^jt\leq4}\tilde{\varphi}\big(\frac{\rho}{2^jt}\big)\Big
(\Phi\big(\frac{\rho}{t}\big)-\Phi(\rho)\Big),\]
where $\tilde{\varphi}\in C_c^\infty(\frac{1}{2},2)$ and
$\sum_{j=-\infty}^{\infty}\tilde\varphi({\frac{\rho}{2^j})}\equiv1$ for all $\rho>0$. We define
\[T_{a,j}f(x)=\int L(x,y)\tilde{\varphi}\Big(\frac{|x-y|}{2^jt}\Big)\Big(\Phi\big(\frac{|x-y|}{t}\big)-\Phi(x-y)\Big)f(y)dy,\]
then $T_{(t,1)}=\sum_{\frac{t}{4}\leq 2^jt\leq2}T_{a,j}$. Applying Minkowski inequality and (\ref{e:5_2}), we have
\Be\label{e:T(t,1)}
\begin{split}
\Big(\fr{1}{t^n}\int_{|x-z|\leq t}|Q_sT_{(t,1)}\eta(z)|^{q'_1}dz\Big)^{\fr{1}{q'_1}}
&\leq\Big(\fr{1}{t^d}\Big)^{\fr{1}{q'_1}}\sum_{\frac{t}{4}\leq 2^jt\leq2}||Q_sT_{a,j}\eta||_{q'_1}\\
&\leq\Big(\fr{1}{t^d}\Big)^{\fr{1}{q'_1}}\sum_{\frac{t}{4}\leq 2^jt\leq2}C\|a\|_\infty\Big(\fr{s}{2^jt}\Big)^{\eps(q'_1)}\|\eta\|_{q'_1}\\
&\leq C\|a\|_\infty\Big(\fr{s}{t}\Big)^{\eps(q'_1)}.
\end{split}
\Ee
By estimates (\ref{e:QsTt}), (\ref{e:T(t,1)}) and H\"older's inequality, the left side of (\ref{e:5_1}) is bounded by
\Be
\begin{split}
&\mspace{20mu}\int_\varepsilon^1\int_\delta^t
\Big(\frac{s}{t}\Big)^{\varepsilon(q'_1)}
\int_{\R^n}\big(M(|Q_sg|^{q_1})(x)\big)^{\fr{1}{q_1}}|Q_tf(x)|dx
\frac{ds}{s}\frac{dt}{t}\\
&\leq C\|a\|_\infty\int_\varepsilon^1\int_\delta^t\Big(\frac{s}{t}\Big)^{\varepsilon(q'_1)}\|Q_sg\|_{2,\fr{1}{\om}}
\|Q_tf\|_{2,\om}
\frac{ds}{s}\frac{dt}{t}.
\end{split}
\Ee
By using H\"older's inequality again, the last term above is majorized by
\begin{equation}\label{e:5_3}
C\|a\|_\infty\bigg(\int_\varepsilon^1\int_\delta^t\Big(\frac{s}{t}\Big)^{\varepsilon(q'_1)}
\|Q_tf\|_{2,\om}^2\frac{ds}{s}\frac{dt}{t}\bigg)^\frac{1}{2}
\bigg(\int_\varepsilon^1\int_\delta^t\Big(\frac{s}{t}\Big)^{\varepsilon(q'_1)}
\|Q_tg\|_{2,\fr{1}{\om}}^2\frac{ds}{s}\frac{dt}{t}\bigg)^\frac{1}{2}.
\end{equation}
Firstly,  let us consider the first integral factor in (\ref{e:5_3}).
Note that
$$\int_\delta^t\Big(\frac{s}{t}\Big)^{\varepsilon(q'_1)}\frac{ds}{s}\le C,$$
hence  by weighted Littlewood-Paley theory (See \cite{K}), we have
\[\bigg(\int_\varepsilon^1\int_\delta^t\Big(\frac{s}{t}\Big)^{\varepsilon(q'_1)}
\|Q_tf\|_{2,\om}^2\frac{ds}{s}\frac{dt}{t}\bigg)^\frac{1}{2}\le C\Big(\int_{\varepsilon}^1\|Q_tf\|_{2,\om}^2\frac{dt}{t}\Big)^{\frac{1}{2}}\leq C\|f\|_{2,\om}.\]
Then using the same method for the other factor, we get
\[\Big(\int_\delta^1\int_s^1\big(\frac{s}{t}\big)^{\varepsilon(q'_1)}\|Q_sg\|_{2,\fr{1}{\om}}^2\frac{dt}{t}\frac{ds}{s}\Big)^\frac{1}{2}\leq
C\Big(\int_\delta^1\|Q_sg\|_{2,\fr{1}{\om}}^2\frac{ds}{s}\Big)^\frac{1}{2}\leq C\|g\|_{2,\fr{1}{\om}}.\]
Therefore (\ref{e:5_1}) is controled by
$\|a\|_\infty\|f\|_{2,\om}\|g\|_{2,\fr{1}{\om}}.$
By extrapolation (See \cite{L2}), we can replace this bound by
$\|a\|_\infty\|f\|_{q,\om}\|g\|_{q',\fr{1}{\om}}$, for any $1<q<\infty$.
Now since $\om$ is a Muckenhoupt weight, we can replace the bound by $\|a\|_\infty\|f\|_q\|g\|_{q'}$ by setting $\om\equiv1$. Since $f\in C_c^\infty(\Bbb R^d)$ with compact support, we have $\|f\|_{q}\leq C\|f\|_{\infty}$. Hence we complete the proof of Theorem \ref{t:1}.

\subsection*{Acknowledgment}
The authors would like to express their deep gratitude to the referee for his/her very careful reading, important comments and valuable suggestions.

\vspace{4mm}

\bibliographystyle{amsplain}

\end{document}